\theoremstyle{plain}
\newtheorem{thm}{Theorem}
\newtheorem{stmt}{Proposition}
\newtheorem{lemma}{Lemma}
\newtheorem{hyp}{Conjecture}
\theoremstyle{definition}
\newtheorem{definition}{Definition}
\newtheorem{remark}{Remark}
\def \prooflike #1 {{\sc {#1.}} }
\def \endproof {$\Box$}
\DeclareMathOperator{\End}{End}
\DeclareMathOperator{\Id}{Id}
\DeclareMathOperator{\rk}{rk}
\DeclareMathOperator{\diag}{diag}
\DeclareMathOperator{\Lie}{Lie}
\DeclareMathOperator{\SL}{SL}
\DeclareMathOperator{\Spin}{Spin}
\DeclareMathOperator{\Sp}{Sp}
\DeclareMathOperator{\F}{F}
\DeclareMathOperator{\E}{E}
\DeclareMathOperator{\G}{G}
\def \isom {\cong}
\def \WW {\mathcal{W}}
\def \NN {\mathbb{N}}
\def \ZZ {\mathbb{Z}}
\def \QQ {\mathbb{Q}}
\def \RR {\mathbb{R}}
\def \kk {\mathbb{K}}
\def \HV #1{X  \left( #1 \right)}
\def \LieG {\mathfrak{g}}
\def \LieH {\mathfrak{h}}
\def \LieT {\mathfrak{t}}
\begin{document}

\subjclass[2000]{13A50, 14L24, 14R20, 22E46}
\title{On stability of diagonal actions and tensor invariants}
\address{Department of Higher Algebra, Faculty of Mechanics and Mathematics, Lomonosov Moscow State University, Leninskie Gory 1, GSP-1, Moscow 119991, Russia}
\email{aanisimov@inbox.ru}
\author{Artem~B.~Anisimov}
\maketitle

\begin{abstract}
For a connected simply connected semisimple algebraic group~$G$ we prove existence of invariant tensors in tensor powers of rational $G$-modules and establish relations between existence of such invariant tensors and stability of diagonal actions of~$G$ on affine algebraic varieties.
\end{abstract}

\section{Introduction}

Recall that an action of a reductive algebraic group~$G$ on an affine variety~$X$ is called stable \cite{PopovStabilityOnFactorialManifold} if its generic orbits are closed. Many actions~$G : X$ do not have this property. In~\cite{ArzhStabilisationOfOperation} it was proved, however, that if~$G$ is semisimple, then every action~$G : X$ can be made stable by considering diagonal action of~$G$ on a sufficiently large number of copies of~$X$. Let us consider an example of diagonal action of~$SL_n\left( \kk \right)$ on product of~$k$ copies of~$\kk^n$. For small values of~$k$ such action is not stable, because there exists a dense $SL_n$-orbit in~$\left( \kk^n \right)^k$ which does not coincide with~$\left( \kk^n \right)^k$. For~$k = n$ generic orbits of this action are level surfaces of the determinant
\begin{equation*}
O_c = \left\{ \left( v_1, \dots, v_n \right) \in \kk^n \times \dots \times \kk^n \ | \ \det\left(v_1, \dots, v_n\right) = c \right\},
\end{equation*}
and are therefore closed. For~$k > n$ generic orbits are closed, too.

Stability of diagonal actions is closely related to existence of nonzero $G$-invariant elements in tensor powers of rational $G$-modules. Let us consider the standard representation of~$SL_n$ from this point of view. Representations in tensor powers $\SL_n : \left( \kk^n \right)^{\otimes k}$ with~$k < n$ have no nonzero invariant elements, while the action on the~$n$-th tensor power does have nonzero invariants. In this example we observe that the minimal tensor power that contains nonzero invariants is the same as minimal number of copies of~$\kk^n$ needed to obtain a stable action. This fact is no coincidence~---~as we will show later, absence of invariants in low tensor powers implies existence of diagonal actions with small number of copies.

Relations between stability of actions and tensor invariants have been revealed in~\cite[Theo\-rem 10]{VinbergStabilityOfRestrictedAction} and have later been used in~\cite{ArzhStabilisationOfOperation} to prove that every effective action of a semisimple group can be made stable by passing to an appropriate diagonal action.

In this article we continue investigation of relations between stability of diagonal actions and existence of nonzero invariant elements in tensor powers of rational modules. We provide lower and upper bounds of number of copies needed to obtain a stable diagonal action and explicitly calculate diagonal of the weight semigroup of action~$G : G^n$. These results extend the results of~\cite{ArzhStabilisationOfOperation} and prove that number of copies required to obtain a stable action depends only on group~$G$. A connection is also established between existence of tensor invariants and existence of \textit{balanced} collections of elements of Weyl groups.

Let us pass to formulation of main results. Below the ground field~$\kk$ is assumed to be an algebraically closed field of characteristic~zero; when no explicit characterisation of a group~$G$ is given, it is assumed to be connected simply connected and semisimple; weights of group~$G$ are taken with respect to a fixed maximal torus~$T \subseteq G$; simple roots and fundamental weights are numbered in the same way as in~\cite{VinbergOnischik}.

\begin{definition}
Let~$G$ be a connected algebraic group. Denote
\begin{equation*}
M\left( G \right) := \left\{ n \in \NN\ | \left( V^{\otimes n} \right)^G \neq \{0\}\ \mbox{for every nonzero rational}\ G\mbox{-module}\ V  \right\}.
\end{equation*}
Denote~$m(G)$ the minimal element of the semigroup~$M(G)$ or~$+\infty$, if~$M(G)$ is empty.
\end{definition}

One does not have to verify that $\left( V^{\otimes n} \right)^G \neq \{0\}$ for every nonzery $G$-module~$V$~---~it suffices to prove that all irreducible modules have this property. Indeed, if $n$-th tensor powers of all nonzero $G$-modules have nontrivial invariants then, à fortiori, $n$-th tensor powers of all irreducible $G$-modules have nonzero invariants. Conversely, fix a~$G$-module~$V$ and its irreducible submodule $U \subseteq V$; we have $\left(V^{\otimes n}\right)^G \supseteq \left(U^{\otimes n}\right)^G \neq \{0\}$.

\begin{thm}\label{M-of-simple-groups}
Semigroups~$M(G)$ with~$G$ simple are listed in the table below:
\begin{center}
\begin{tabular}{|ccccccc|}
\hline
$G$								&	\vline	&	$M(G)$	&	\vline	&	$G$					&	\vline	&	M(G)															\\
\hline
$\SL_n$						&	\vline	&	$n\NN$	&	\vline	& $\G_2$				&	\vline	&	$\left\{ n \in \NN\ |\ n \geq 2 \right\}$			\\
$\Spin_{2n+1}$			&	\vline	&	$2\NN$	&	\vline	& $\F_4$				&	\vline	&	$\left\{ n \in \NN\ |\ n \geq 2 \right\}$			\\
$\Spin_{4n+2}$			&	\vline	&	$4\NN$	&	\vline	& $\E_6$				&	\vline	&	$3\NN$														\\
$\Spin_{4n+4}$			&	\vline	&	$2\NN$	&	\vline	& $\E_7$				&	\vline	&	$2\NN$														\\
$\Sp_{2n}$					&	\vline	&	$2\NN$	&	\vline	& $\E_8$				&	\vline	&	$\left\{ n \in \NN\ |\ n \geq 2 \right\}$			\\
\hline
\end{tabular}
\end{center}
\end{thm}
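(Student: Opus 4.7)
My plan is to prove matching upper and lower bounds on $M(G)$ for each simple simply connected~$G$.

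\textbf{Upper bound (obstruction from the centre).} The centre $Z(G) \cong \mathrm{Hom}(P/Q, \kk^*)$ acts on an irreducible module $V_\lambda$ through the character corresponding to $[\lambda] \in P/Q$, and hence on $V_\lambda^{\otimes n}$ through $n[\lambda]$. If $n[\lambda] \neq 0$ in $P/Q$ then already $(V_\lambda^{\otimes n})^{Z(G)} = 0$, \emph{a fortiori} $(V_\lambda^{\otimes n})^G = 0$. Thus $n \in M(G)$ forces $n$ to annihilate $P/Q$, i.e., $n$ is a multiple of the exponent $e$ of $P/Q$; and $1 \notin M(G)$ since any nontrivial irreducible has no $G$-fixed vector. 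Reading off $e = n, 2, 4, 2, 2, 1, 1, 3, 2, 1$ for $\SL_n, \Spin_{2n+1}, \Spin_{4n+2}, \Spin_{4n+4}, \Sp_{2n}, \G_2, \F_4, \E_6, \E_7, \E_8$ respectively gives $M(G) \subseteq e \NN \cap \{k \geq 2\}$, matching the table.

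\textbf{Reductions for the lower bound.} First, $M(G)$ is closed under addition: $x \otimes y$ is a nonzero invariant in $V^{\otimes(m+n)}$ whenever $x \in (V^{\otimes m})^G$ and $y \in (V^{\otimes n})^G$ are, so $M(G)$ is a numerical sub-semigroup of~$\NN$ and it suffices to exhibit a small generating set. Second, whenever $-w_0 = \Id$ on~$P$ (types $B_n, C_n, D_{2k}, \G_2, \F_4, \E_7, \E_8$), every irreducible is self-dual and the evaluation pairing supplies a nonzero invariant in $V_\lambda \otimes V_\lambda \cong V_\lambda \otimes V_\lambda^*$, so $2 \in M(G)$ for these types. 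In particular $2\NN \subseteq M(G)$ disposes of $B_n, C_n, D_{2k}, \E_7$ completely, and reduces $\G_2, \F_4, \E_8$ to proving~$3 \in M(G)$, after which $\{k \geq 2\}$ is generated as the numerical semigroup $\langle 2, 3\rangle$.

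\textbf{Remaining cases.} What is left is $n \in M(\SL_n)$, $4 \in M(\Spin_{4k+2})$, $3 \in M(\E_6)$, and $3 \in M(\G_2), M(\F_4), M(\E_8)$. For all of these I plan to use balanced collections of Weyl group elements: a tuple $(w_1,\dots,w_n) \in W^n$ with $\sum_i w_i = 0$ on $\LieT^*_\QQ$ should yield a nonzero $G$-invariant in $V_\lambda^{\otimes n}$ for every $V_\lambda$, constructed by Weyl-symmetrising a tensor of weight vectors of weights $w_i\lambda$. For $\SL_n$, the long cycle $c \in S_n$ gives the balanced $n$-tuple $(\Id, c, c^2, \dots, c^{n-1})$ (the sum is $n$ times the projection onto $S_n$-invariants, hence vanishes on the root space). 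For $\G_2, \F_4, \E_6, \E_8$, the Coxeter element raised to the power $h/3$ is well defined (since $3 \mid h$ in each case) and acts fixed-point-freely---its eigenvalues, namely $e^{2\pi i k /3}$ as $k$ ranges over the exponents mod~$3$, are by inspection all nontrivial cube roots of unity---giving the balanced triple $(\Id, c^{h/3}, c^{2h/3})$. For $\Spin_{4k+2}$ one has to exhibit a balanced quadruple in $W(D_{2k+1})$; the standard trick $(\Id,-\Id,w,-w)$ is unavailable because $-\Id \notin W(D_{2k+1})$, so one must instead construct four signed permutations with even numbers of sign-changes whose sum vanishes.

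\textbf{Main obstacle.} The delicate technical point is the precise passage from a balanced collection to an actual tensor invariant: the obvious candidate is only immediately of weight $0$ in $V_\lambda^{\otimes n}$, not yet $G$-fixed, and one must verify that an appropriate averaging over~$W$ neither annihilates it nor leaves it in a non-trivial isotypic component. The second hard point is the combinatorial construction of a balanced quadruple for $W(D_{2k+1})$, where $-\Id$ and the naive pairing argument are both unavailable, so the construction must exploit the even-sign-change structure of $W(D_{2k+1}) \subset \{\pm 1\}^{2k+1} \rtimes S_{2k+1}$ directly.
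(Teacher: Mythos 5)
Your overall strategy coincides with the paper's: the upper bounds come from the action of the centre on irreducibles (the paper's Lemma~\ref{center-restricts-M} is exactly your $P/Q$-exponent computation), and the lower bounds come from balanced collections in the Weyl group, with the same collections in each case (the long cycle for $\SL_n$, the element $-\Id$ when $w_0=-\Id$, and the power $c^{h/3}$ of a Coxeter element for $\G_2$, $\F_4$, $\E_6$, $\E_8$). However, there is one genuine gap, and it sits exactly at the step you yourself flag as the ``main obstacle'': the passage from a balanced collection $(w_1,\dots,w_m)$ to a nonzero invariant in $V(\lambda)^{\otimes m}$. Your proposed ``Weyl-symmetrising a tensor of weight vectors of weights $w_i\lambda$'' is not a proof; a weight-zero vector need not generate (or even meet) the trivial isotypic component, and averaging over $W$ is not even a well-defined operation on $V(\lambda)^{\otimes m}$. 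The paper closes this gap by invoking the PRV theorem (Kumar, Mathieu): for arbitrary, not necessarily dominant, weights $\lambda$ and $\mu$ the product $V(\lambda)\otimes V(\mu)$ contains the irreducible with \emph{extremal} weight $\lambda+\mu$. Since each $w_i\lambda$ is an extremal weight of $V(\lambda)$, iterating PRV shows $V(\lambda)^{\otimes m}=V(w_1\lambda)\otimes\cdots\otimes V(w_m\lambda)$ contains $V(w_1\lambda+\cdots+w_m\lambda)=V(0)$, the trivial module. Without PRV or an equivalent input this implication does not follow, so you need to cite it (or reprove it, which is a substantial undertaking).

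Two smaller points. For $D_{2k+1}$ you correctly observe that $-\Id\notin W$ and that a balanced quadruple of even-sign-change diagonal elements is needed, but you do not produce one; the paper exhibits $w_1=\diag(1,-1,-1,\dots,-1)$, $w_2=\diag(-1,1,-1,\dots,-1)$, $w_3=\diag(-1,-1,1,\dots,1)$, $w_4=\Id$, which sum to zero and each lie in $W(D_{2k+1})$. For $\E_6$ your ``by inspection'' of the exponents mod $3$ presupposes knowing all six exponents; Lemma~\ref{good-m-is-exponent} only yields the four coprime to $h=12$, and the paper must use the product formula $|\WW|=\prod(m_i+1)$ to pin down the remaining two as $4$ and $8$ before concluding that $c^4$ has no eigenvalue $1$. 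Both of these are fillable, but as written they are missing steps.
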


Calculation of~$M(G)$ for an arbitrary (not necessarily reductive) group~$G$ can be reduced to the cases listed in the table above by applying the following two propositions.

\begin{stmt}\label{same-as-reductive}
Let~$G$ be a connected affine algebraic group, $F$ its unipotent radical and  $H = G \big/ F$. Then $M(G) = M(H)$.
\end{stmt}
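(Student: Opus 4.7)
The plan is to prove both inclusions $M(H) \subseteq M(G)$ and $M(G) \subseteq M(H)$ by exploiting two elementary features of the short exact sequence $1 \to F \to G \to H \to 1$: first, since $F$ is unipotent and normal in $G$, for any nonzero rational $G$-module $V$ the subspace $V^F$ is a nonzero rational $H$-module; second, every rational $H$-module pulls back along the projection $\pi\colon G \to H$ to a rational $G$-module on which $F$ acts trivially.

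For the inclusion $M(G) \subseteq M(H)$, I take any $n \in M(G)$ and any nonzero rational $H$-module $W$. Pulling $W$ back via~$\pi$ produces a rational $G$-module which I still denote by~$W$; by assumption $(W^{\otimes n})^G \neq \{0\}$. Since $F$ acts trivially on $W$, it acts trivially on $W^{\otimes n}$, so $G$-invariants coincide with $H$-invariants, giving $(W^{\otimes n})^H \neq \{0\}$ and hence $n \in M(H)$.

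For the inclusion $M(H) \subseteq M(G)$, I take $n \in M(H)$ and any nonzero rational $G$-module~$V$. The subspace $V^F$ is nonzero: pick any nonzero finite-dimensional $G$-submodule of~$V$ and apply the Lie--Kolchin-type fact that a unipotent group acting rationally on a nonzero finite-dimensional vector space has a fixed vector. Since $F \trianglelefteq G$, the subspace $V^F$ is $G$-stable and the $F$-action on it is trivial, so $V^F$ is naturally a rational $H$-module. By the assumption $n \in M(H)$, we have $((V^F)^{\otimes n})^H \neq \{0\}$. Using the obvious inclusion $(V^F)^{\otimes n} \subseteq (V^{\otimes n})^F$ and the fact that $H$-invariants on the $F$-fixed part coincide with $G$-invariants on the whole, we conclude $(V^{\otimes n})^G \neq \{0\}$, so $n \in M(G)$.

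There is no serious obstacle here; the only thing to be mildly careful about is that rational $G$-modules need not be finite-dimensional, which is why I reduce to a finite-dimensional $G$-submodule before invoking unipotence of~$F$ to produce an $F$-fixed vector. Once both inclusions are established, the equality $M(G) = M(H)$ follows.
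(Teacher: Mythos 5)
Your proof is correct and follows essentially the same route as the paper: pulling back $H$-modules along $\pi$ for the inclusion $M(G)\subseteq M(H)$, and passing to the nonzero $G$-stable submodule $V^F$ (which becomes an $H$-module) for the reverse inclusion. The only difference is that you spell out why $V^F\neq\{0\}$ via the unipotent fixed-point theorem, a step the paper simply asserts.
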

In fact this proposition shows that~$M(G)$ is to be calculated only for semisimple groups~$G$, not for reductive groups. Indeed, if $Z \subseteq G$ is a nontrivial central torus in group~$G$, then~$M(G) = \emptyset$; it follows from the fact that such group~$G$ can act nontrivially by multiplications on~$\kk^1$ and on all tensor powers of~$\kk^1$.
\begin{stmt}\label{M-of-product}
Let $G = G_1 \times G_2$ be a product of two reductive groups $G_1$ and $G_2$. Then $M(G) = M\left( G_1 \right) \cap M\left( G_2 \right)$.
\end{stmt}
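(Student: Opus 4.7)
The plan is to exploit the standard fact that, over an algebraically closed field of characteristic zero, every irreducible rational representation of a product of reductive groups $G_1 \times G_2$ is an outer tensor product $V_1 \boxtimes V_2$ of irreducible rational representations of the factors, together with the elementary identity $(V_1 \boxtimes V_2)^{\otimes n} \isom V_1^{\otimes n} \boxtimes V_2^{\otimes n}$ and the fact that invariants of an outer tensor product split as a tensor product of invariants. By the remark following the definition of $M(G)$, it suffices throughout to test the defining condition on irreducible modules.

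For the inclusion $M(G_1 \times G_2) \subseteq M(G_1) \cap M(G_2)$, I would take any $n \in M(G_1 \times G_2)$ and any nonzero irreducible rational $G_1$-module $V_1$, and view it as a $G_1 \times G_2$-module on which $G_2$ acts trivially. Then
\begin{equation*}
\left( V_1^{\otimes n} \right)^{G_1} = \left( V_1^{\otimes n} \right)^{G_1 \times G_2} \neq \{0\},
\end{equation*}
so $n \in M(G_1)$; the argument for $G_2$ is symmetric.

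For the reverse inclusion, suppose $n \in M(G_1) \cap M(G_2)$ and let $W$ be an arbitrary nonzero irreducible rational $(G_1 \times G_2)$-module. Write $W \isom V_1 \boxtimes V_2$ with $V_i$ a nonzero irreducible rational $G_i$-module. Then
\begin{equation*}
\left( W^{\otimes n} \right)^{G_1 \times G_2} \isom \left( V_1^{\otimes n} \right)^{G_1} \otimes \left( V_2^{\otimes n} \right)^{G_2},
\end{equation*}
and both factors are nonzero by hypothesis, so $n \in M(G_1 \times G_2)$.

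There is no real obstacle here; the only non-formal ingredient is the decomposition of irreducibles of a product of reductive groups into outer tensor products, which is a well-known consequence of complete reducibility and Schur's lemma in characteristic zero and can simply be cited.
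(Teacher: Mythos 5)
Your proposal is correct and follows essentially the same route as the paper's proof: both inclusions are handled identically, using trivial extension of modules from a factor for one direction and the decomposition of irreducibles of $G_1 \times G_2$ as outer tensor products (together with $\left(V_1 \otimes V_2\right)^{\otimes n} \isom V_1^{\otimes n} \otimes V_2^{\otimes n}$ and the splitting of invariants) for the other. The only cosmetic difference is that you invoke the reduction to irreducible modules in the first inclusion as well, where the paper works with arbitrary modules; both are fine.
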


Applying this proposition one can easily find~$M(G)$ if~$G$ is a connected simply connected semisimple group, that is, if~$G$ is a product of simply connected simple groups. Considering groups~$G$ that are not simply connected is a more involved problem and it seems probable that every group~$G$ that is not simply connected requires an ad hoc approach. However, it is clear that if~$G_1$ and~$G_2$ are semisimple groups of the same type and~$G_1$ is simply connected, then~$M\left( G_1 \right) \subseteq M\left( G_2 \right)$.

It turns out that calculation of semigroups~$M(G)$ is tightly related to describing balanced collections of elements of the Weyl group of~$G$.

\begin{definition}
Let~$\WW$ be the Weyl group of~$G$. A collection of elements $w_1, \dots, w_k \in \WW$ is called \textit{balanced}, if $w_1 + \dots + w_n = 0$ (the sum is considered as a sum of endomorphisms of the $\QQ$-linear span of roots of $G$).
\end{definition}

\begin{thm}\label{existence-of-balanced-tuples}
Let~$G$ be a simple group and~$\WW$ be the Weyl group of~$G$. There exists a balanced collection of~$m$ elements of $\WW$ iff $m \in M(G)$.
\end{thm}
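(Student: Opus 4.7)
The plan is to prove the two implications separately through a character-theoretic reformulation. By the remarks following the definition of $M(G)$, it suffices to consider irreducible modules $V_\lambda$, and $\dim(V_\lambda^{\otimes m})^G$ is computed by the Weyl integration formula on a compact real form $K \subset G$:
\[
\dim(V_\lambda^{\otimes m})^G = \frac{1}{|\WW|}\int_{T_K} \chi_{V_\lambda}(t)^m\, |A_\rho(t)|^2\, dt.
\]
Substituting $\chi_{V_\lambda} = A_{\lambda+\rho}/A_\rho$ and expanding the Weyl alternants $A_\mu = \sum_{w\in \WW}\epsilon(w) e^{w\mu}$, the integrand becomes a signed sum of exponentials indexed by tuples of Weyl-group elements, and integration over $T_K$ extracts exactly those tuples whose exponents sum to zero. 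The task is to match this vanishing-sum condition with the existence of a balanced $m$-tuple.

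For the sufficiency direction (balanced tuple $\Rightarrow m\in M(G)$), suppose $w_1,\dots,w_m\in\WW$ satisfy $\sum_i w_i = 0$. For any irreducible $V_\lambda$, fix a highest weight vector $v_\lambda$ and lifts $\dot w_i \in N_G(T)$ of the $w_i$. The tensor $v := \dot w_1 v_\lambda \otimes \cdots \otimes \dot w_m v_\lambda$ has $T$-weight $\sum_i w_i(\lambda) = 0$, so its Reynolds average lies in $(V_\lambda^{\otimes m})^G$. I would verify nonvanishing by pairing the Reynolds average against the analogous dual tensor built from lowest weight vectors of $V_\lambda^*$ and evaluating the resulting integral via Weyl orthogonality; the balanced hypothesis guarantees that the dominant exponential survives, yielding a nonzero scalar.

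For the necessity direction, I argue by contrapositive: if no balanced $m$-tuple exists, construct an irreducible $V_\lambda$ with $(V_\lambda^{\otimes m})^G = 0$. Take $V_\lambda = V_{N\mu}$ with $\mu$ regular dominant and $N$ large. In the expanded character integral, each nonzero contribution is indexed by a tuple $(w_1,\dots,w_m)\in \WW^m$ whose weighted sum $\sum_i w_i(N\mu+\rho)$ equals a bounded (independent of $N$) lattice element; normalizing by $N$ and letting $N\to\infty$, the surviving tuples must satisfy $\sum_i w_i(\mu) = 0$. Varying $\mu$ throughout the dominant chamber --- legitimate because $m\in M(G)$ provides invariants in every $V_{N\mu}^{\otimes m}$ --- the pointwise identity upgrades to $\sum_i w_i = 0$ as endomorphisms, contradicting the hypothesis. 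The main obstacle is precisely this last step: extracting the endomorphism identity from the asymptotic character integral requires careful control of lower-order terms coming from non-extremal weights of $V_{N\mu}$ and from the Weyl-denominator factors.
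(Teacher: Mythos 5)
Both directions of your argument have a gap at their crucial point, and the second one follows a route that cannot work. For ``balanced tuple $\Rightarrow m\in M(G)$'' everything reduces to showing that the Reynolds average of $\dot w_1 v_\lambda\otimes\cdots\otimes \dot w_m v_\lambda$ is nonzero, and that is exactly the hard step: a weight-zero vector can perfectly well project to zero in $\left(V(\lambda)^{\otimes m}\right)^G$, and the pairing you propose is a $K$-integral of a product of $m$ matrix coefficients, which Schur/Weyl orthogonality does not evaluate once $m\geq 3$; nothing in the balanced hypothesis makes a single ``dominant exponential'' survive. The paper closes this gap by citing the PRV theorem (Kumar, Mathieu): since $V(w_i\lambda)=V(\lambda)$, one writes $V(\lambda)^{\otimes m}=V(w_1\lambda)\otimes\cdots\otimes V(w_m\lambda)$ and iterated PRV yields the component $V(w_1\lambda+\cdots+w_m\lambda)=V(0)$. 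That is a deep imported result, not something recoverable from orthogonality relations, so your sufficiency direction is missing its key ingredient.

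The converse direction is where the proposal genuinely breaks. First, the claimed expansion of $\chi_\lambda^m\,|A_\rho|^2$ as a signed exponential sum indexed by $\WW^m$-tuples is false for $m\geq 3$: substituting $\chi_\lambda=A_{\lambda+\rho}/A_\rho$ leaves $A_{\lambda+\rho}^m\,\overline{A_\rho}\,A_\rho^{1-m}$, which is not a finite alternating sum, and the honest expansion involves all weights of $V_\lambda^{\otimes(m-1)}$ with multiplicities, not only extremal ones. Second, and decisively, the asymptotic localization cannot hold: for $G=\SL_2$ and $m=3$ the modules $V_{2N}^{\otimes 3}$ contain invariants for every $N$ (the weight $2N\varpi_1$ is regular dominant), yet $\WW=\{\pm\Id\}$ contains no triple with $\varepsilon_1+\varepsilon_2+\varepsilon_3=0$. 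Existence of invariants for large highest weights is governed by the full-dimensional tensor cone, which is far larger than the locus cut out by extremal-weight relations, so no control of lower-order terms will extract a balanced tuple from large-$N$ data. The obstruction that actually makes $m\notin M(G)$ is a torsion phenomenon detected at small $\lambda$ by the center (Lemma~\ref{center-restricts-M}); accordingly the paper proves this implication not by a soft argument but by computing $M(G)$ case by case in Theorem~\ref{M-of-simple-groups} and exhibiting, for each simple type, explicit balanced collections ($-\Id$, suitable powers of Coxeter elements, the four sign-change elements for $D_{2n+1}$, and concatenations of these) of every length occurring in $M(G)$.
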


Now we pass to relations between semigroups~$M(G)$ and stibility of diagonal actions of groups~$G$.

\begin{definition}
Let~$G$ be a connected semisimple algebraic group which is not necessarily simply connected. Denote
\begin{itemize}
\item $s_m(G)$ the smallest natural number such that for every affine variety~$X$ with an effective action of~$G$ the diagonal action on product of~$s_m(G)$ copies of~$X$ is stable,
\item $s_s(G)$ the smallest natural number such that for every affine variety~$X$ with an effective action of~$G$ and for every~$k \geq s_s(G)$ the diagonal action of~$G$ on product of~$k$ copies of~$X$ is stable.
\end{itemize}
The numbers~$s_m(G)$ and~$s_s(G)$ are called \textit{metastability index} and \textit{stability index} respectively.
\end{definition}

Existence of number~$s_s(G)$ for a semisimple group~$G$ is stated by Theorem~\ref{s_s-upper-bound}. Obviously, we have~$s_m(G) \leq s_s(G)$.

The reason for separating metastability and stability indices is that stability of diagonal action on~$k$ copies of variety~$X$ does not imply stability of action on~$r$ copies of~$X$ with~$r > k$. Such phenomenon is exhibited by symplectic groups~$\Sp_{2m}$. Indeed, take~$X$ to be the standard representation of~$\Sp_{2m}$ in~$\kk^{2m}$ and take~$k \leq 2m$. It is easy to see that if~$k$ is even then the stabiliser in general position of~$\Sp_{2m} : \left( \kk^{2m} \right)^k$ is isomorphic to~$\Sp_{2m-k}$ and therefore reductive; by~\cite[Theorem 1]{PopovStabilityOnFactorialManifold} we have that the action~$\Sp_{2m} : \left( \kk^{2m} \right)^k$ is stable. If~$k$ is odd then s.~g.~p. of~$\Sp_{2m} : \left( \kk^{2m} \right)^k$ contains a nontrivial normal unipotent subgroup hence this action is not stable.

The following two statements give bounds of stability indices in terms of~$M(G)$.
\begin{thm}\label{s_m-lower-bound}
Let~$G$ be a simple simply connected group. Then~$m(G) \leq s_m(G)$.
\end{thm}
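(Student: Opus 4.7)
I argue the contrapositive. Given $k$ with $k<m(G)$, I will construct an affine $G$-variety $X$ admitting an effective $G$-action such that the diagonal action $G:X^k$ is not stable. This establishes $s_m(G)\geq m(G)$.

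\textbf{Choice of $X$.} Since $k\notin M(G)$, there is an irreducible rational $G$-module $V$ with $(V^{\otimes k})^G=\{0\}$; the reduction to irreducible $V$ follows from the remark after the definition of $M(G)$, using the $G$-submodule inclusion $U^{\otimes k}\hookrightarrow V^{\otimes k}$ for any submodule $U\subseteq V$. Because $G$ is simply connected and simple, the kernel of $V$ lies in the finite centre $Z(G)$; by an appropriate choice of highest weight (and, in the exceptional case $G=\Spin_{4n+4}$ where no single faithful irreducible exists, by replacing $V$ with a faithful reducible module built from two half-spin representations, arranged so that the vanishing of the relevant tensor invariants is preserved on a distinguished irreducible summand) I may assume $V$ is faithful. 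Set $X:=V^{*}$; then $G$ acts effectively on the affine variety $X$.

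\textbf{Vanishing of multilinear invariants and instability.} Under the canonical identification $\kk[X^k]\cong(S^{\bullet}V)^{\otimes k}$, the multidegree $(1,\dots,1)$ component is precisely the $G$-module $V^{\otimes k}$; hence its $G$-invariants vanish by our choice of $V$. In other words, no $G$-invariant multilinear polynomial function on $X^k$ exists. To conclude that $G:X^k$ is not stable, I appeal to the criterion of \cite[Theorem~10]{VinbergStabilityOfRestrictedAction} (exploited in \cite{ArzhStabilisationOfOperation}): stability of a reductive group action on an affine $G$-variety translates into a non-vanishing requirement for certain low-degree tensor invariants in the coordinate ring. In our multigraded setting, stability of $G:X^k$ would force a non-zero invariant of multidegree $(1,\dots,1)$, contradicting the vanishing arranged above. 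Equivalently, and perhaps more transparently, one shows that the stabiliser in general position of $G$ on $(V^{*})^k$ acquires a non-trivial normal unipotent subgroup from the absence of multilinear $V$-invariants, and Popov's criterion \cite{PopovStabilityOnFactorialManifold} then yields that the action is not stable.

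\textbf{Main obstacle.} The delicate step is the passage from the vanishing of a specific multigraded slice of the invariant ring to the failure of the stability condition on $X^k$. This requires either a precise application of Vinberg's weight-semigroup criterion in the multigraded setting, or an explicit analysis of the generic stabiliser of $G:(V^{*})^k$ to exhibit the unipotent obstruction. The hypothesis that $G$ be simply connected is used both to control the centre when choosing $V$ (ensuring we can arrange faithfulness without introducing spurious invariants) and to guarantee that the highest-weight structure of $V$ translates cleanly into the stabiliser calculation --- so that absence of tensor invariants forces non-reductivity of the generic isotropy group.
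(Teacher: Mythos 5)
Your proposal has a genuine gap at exactly the step you flag as the ``main obstacle'': the implication that vanishing of the multidegree $(1,\dots,1)$ invariants, i.e.\ $\left(V^{\otimes k}\right)^G=\{0\}$, forces instability of $G:\left(V^*\right)^k$, is false. Counterexample: $G=\SL_2$, $k=1$, $V=V(4\varpi_1)$ the binary quartics. Here $V^G=\{0\}$, yet the action $\SL_2:V^*\cong V$ is stable --- the generic stabiliser is finite, hence reductive, so Popov's criterion gives stability; equivalently $\kk[V]^{\SL_2}$ is generated by invariants of degrees $2$ and $3$ and generic fibres of the quotient map are single closed orbits. Stability is governed by the whole invariant ring, not by one multigraded slice, so neither the appeal to Vinberg's criterion nor the claim that the s.g.p.\ ``acquires a normal unipotent subgroup from the absence of multilinear invariants'' can be repaired for $X=V^*$: the same example refutes both.

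The paper's fix is to shrink $X$ from the full module to the HV-variety $X(\lambda)=\overline{G\cdot v_\lambda}$, whose coordinate ring is (up to duals) $\bigoplus_n V(n\lambda)$. Then every invariant of $\kk\left[X(\lambda)^{\times k}\right]$ lies in some $\left(V(n_1\lambda)\otimes\dots\otimes V(n_k\lambda)\right)^G$, and Popov's Theorem~\ref{inv-free-implies-instability} shows the action is non-stable precisely when the tuple $(\lambda,\dots,\lambda)$ is invariant-free, i.e.\ when $\left(V(n_1\lambda)\otimes\dots\otimes V(n_k\lambda)\right)^G=\{0\}$ for \emph{all} $n_1,\dots,n_k$ --- a strictly stronger condition than the single vanishing $\left(V(\lambda)^{\otimes k}\right)^G=\{0\}$ that you arrange. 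Supplying this stronger input is the real content of the paper's proof: for the groups with $m(G)=2$ it is immediate (an HV-variety is a cone, so already one copy is not stable); for $\SL_n$ one uses the dense non-closed orbit in $\left(\kk^n\right)^k$, $k<n$; for $\Spin_{4n+2}$ with $k=3$ it requires the Littlewood--Richardson computation of Lemma~\ref{half-spin-rep-is-inv-free} showing $(\varpi_{2n+1},\varpi_{2n+1},\varpi_{2n+1})$ is invariant-free; and for $\E_6$ with $k=2$ it is replaced by the explicit dense-orbit computation of Lemma~\ref{e6-has-open-orbit-in-x1}. None of these ingredients appears in your argument, so the gap is essential rather than a routine verification.
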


\begin{thm}\label{s_s-upper-bound}
Let~$e(G)$ be the smallest natural number such that for every affine variety~$X$ with effective action of~$G$ the diagonal action of~$G$ on~$e(G)$ copies of~$X$ has finite s.~g.~p. Then~$s_s(G) \leq e(G)m(G)$.
\end{thm}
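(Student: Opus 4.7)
My plan is to invoke Popov's stability criterion from~\cite{PopovStabilityOnFactorialManifold}: for a reductive group~$G$ acting on an affine variety, the action is stable if and only if its stabiliser in general position is reductive. Accordingly it will suffice to verify, for every effective action $G : X$ and every integer $k \geq e(G) m(G)$, that the s.~g.~p.\ of the diagonal action $G : X^k$ is a reductive subgroup of~$G$.

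To carry this out, I would fix such $X$ and $k$, and invoke the definition of~$e(G)$: the diagonal action $G : X^{e(G)}$ has finite s.~g.~p.~$H$, and there exists a nonempty $G$-invariant open $U \subseteq X^{e(G)}$ on which every point-stabiliser is conjugate to~$H$. Factoring $X^k \cong X^{e(G)} \times X^{k - e(G)}$, I would pick a generic $(y, z) \in X^k$; openness and dominance of the projection to $X^{e(G)}$ force $y \in U$, so $G_y$ is conjugate to~$H$, and hence the stabiliser $G_{(y,z)} = G_y \cap G_z$ is a subgroup of a conjugate of~$H$, in particular finite. The s.~g.~p.\ of $G : X^k$ is thus finite, hence reductive, and Popov's criterion delivers stability of $G : X^k$.

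The same reasoning applies for every $k \geq e(G)$ and in particular for every $k \geq e(G) m(G)$, yielding the bound $s_s(G) \leq e(G) m(G)$.

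The step I expect to carry the substantive content is \emph{not} the reductivity argument above, but rather the guarantee that $e(G)$ is a well-defined finite natural number: one must produce a universal bound on the number of copies required to make the s.~g.~p.\ finite, valid for all affine $X$ with effective $G$-action. The factor $m(G)$ in the stated bound is not strictly forced by the argument above, which yields the tighter $s_s(G) \leq e(G)$; I suspect it reflects an organisational choice to pair cleanly with Theorem~\ref{s_m-lower-bound}, giving the sandwich $m(G) \leq s_m(G) \leq s_s(G) \leq e(G) m(G)$.
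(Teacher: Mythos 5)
The decisive step of your argument --- ``the s.~g.~p.\ of $G : X^k$ is finite, hence reductive, and Popov's criterion delivers stability'' --- is where the proof breaks down. The criterion of \cite{PopovStabilityOnFactorialManifold} equates stability with reductivity of the s.~g.~p.\ only for \emph{factorial} affine varieties (the hypothesis is in the title of that paper, and the present paper invokes it only for the affine space $\left( \kk^{2m} \right)^k$). A power $X^k$ of an arbitrary affine $G$-variety need not be factorial, and the implication ``finite s.~g.~p.\ $\Rightarrow$ stable'' genuinely fails without that hypothesis. Concretely, let $v = x^3y$ in the space of binary quartics and $X = \overline{\SL_2 \cdot v}$: the dense orbit $\SL_2 \cdot v$ has stabiliser $\left\{ \pm I \right\}$, so the s.~g.~p.\ of $\SL_2 : X$ is finite, yet $\diag\left(t, t^{-1}\right) \cdot v = t^{-2}v$, so $0 \in \overline{\SL_2 \cdot v}$, the generic orbit is not closed, and the action is unstable. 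Your own observation that the argument ``yields the tighter $s_s(G) \leq e(G)$'' should have been a warning sign: that bound does not follow, and the factor $m(G)$ is not an organisational convenience --- it is precisely what compensates for the failure of Popov's criterion on non-factorial varieties.

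The portion of your argument that does work is the reduction to finiteness of the s.~g.~p.: for $k \geq e(G)$ the stabiliser of a generic point of $X^k$ is contained in a conjugate of the finite s.~g.~p.\ of $X^{e(G)}$, and the existence of $e(G)$ (bounded by $\dim G$) is indeed the point the paper flags explicitly. What is missing is the second, substantive step: passing from ``finite s.~g.~p.''\ to ``closed generic orbits''. The paper does not write this out either --- it only states that the theorem follows by a simple modification of the argument of \cite[Theorem 1]{ArzhStabilisationOfOperation} --- but that argument rests on the link between stability and tensor invariants going back to \cite[Theorem 10]{VinbergStabilityOfRestrictedAction}: once the s.~g.~p.\ is finite, one needs on the order of $m(G)$ further blocks of copies so that the relevant tensor powers acquire nonzero $G$-invariants, and it is the presence of these invariants that forces generic orbits to be closed. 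That is exactly the step your proposal replaces with an inapplicable criterion, and it is where the content of the theorem lies.
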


Theorem~\ref{s_s-upper-bound} is proved by a simple modification of argument in~\cite[Theorem 1]{ArzhStabilisationOfOperation}. Note that number~$e(G)$ exists and is not greater than dimension of group~$G$.

The result of Theorem~\ref{s_m-lower-bound} can be substantially improved for groups that have only self-conjugate linear representations. This improvement can be made by applying results of~\cite{PanyushevSelfConjugate}.
\begin{thm}\label{s_m-for-self-conjugate}
Let~$G$ be a connected semisimple algebraic group which is not necessarily simply connected. Suppose additionally that all linear representations of~$G$ are self-conjugate. Then~$s_m(G) = m(G) = 2$.
\end{thm}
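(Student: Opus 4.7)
My plan is to split the theorem into the two equalities $m(G) = 2$ and $s_m(G) = 2$, treating each separately. The value $m(G) = 2$ is immediate from self-conjugacy: for any nonzero irreducible $G$-module $V$ we have $V \cong V^{*}$, so
\begin{equation*}
V \otimes V \;\cong\; V \otimes V^{*} \;\cong\; \End(V),
\end{equation*}
and $\Id_{V}$ is a nonzero $G$-invariant; hence $2 \in M(G)$. Since $G$ is semisimple and nontrivial it admits a nontrivial irreducible representation, which has no nonzero $G$-fixed vector, so $1 \notin M(G)$ and $m(G) = 2$.

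For the lower bound $s_m(G) \geq 2$, the simplest route is to observe that the argument of Theorem~\ref{s_m-lower-bound}, which establishes $m(G) \leq s_m(G)$ in the simply connected simple case, extends without essential change to an arbitrary connected semisimple $G$: starting from any nontrivial irreducible $G$-module $V$ (which satisfies $V^G = 0$ and so witnesses $1 \notin M(G)$), the construction there produces an effective affine $G$-variety whose action on a single copy is not stable. Combined with $m(G) = 2$ this yields $s_m(G) \geq 2$.

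The main step, and the one I expect to be hardest, is the upper bound $s_m(G) \leq 2$. Given an affine $G$-variety $X$ with an effective action, I plan to apply Popov's criterion~\cite{PopovStabilityOnFactorialManifold}: it suffices to prove that the stabilizer in general position of the diagonal action $G : X \times X$ is reductive. The crucial input I would invoke is the result of Panyushev~\cite{PanyushevSelfConjugate} on group actions all of whose representations are self-conjugate, which gives reductivity of the s.g.p.\ of $G : V \oplus V$ for every rational $G$-module $V$. I would then fix a $G$-equivariant closed embedding $X \hookrightarrow V$, so that the stabilizer of $(x_1, x_2) \in X \times X$ in $G$ coincides with the stabilizer of the same point inside $V \oplus V$, and propagate the reductivity statement from $V \oplus V$ to $X \times X$ by applying Luna's slice theorem at a generic point of $X$.

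The main obstacle is precisely this last transfer: a pair generic in $X \times X$ need not be generic in $V \oplus V$, so Panyushev's linear statement has to be combined with a careful slice argument which ensures that self-conjugacy persists for the slice representation at a generic point of $X$ and that Panyushev's conclusion can be read off from that linear local model. Once reductivity of the s.g.p.\ of $G : X \times X$ is secured, Popov's criterion yields stability of the diagonal action on two copies, giving $s_m(G) \leq 2$ and completing the argument.
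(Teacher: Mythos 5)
Your computation of $m(G)=2$ coincides with the paper's and is correct, and the lower bound $s_m(G)\geq 2$ is indeed easy. The genuine gap is in the upper bound $s_m(G)\leq 2$, and it is exactly the step you flag as the ``main obstacle'': there is no way to propagate reductivity of the stabiliser in general position from $G:V\oplus V$ to $G:X\times X$ for a closed $G$-stable subvariety $X\subseteq V$. A generic point of $X\times X$ is not generic in $V\oplus V$, and Luna's slice theorem at a generic point of $X$ describes the normal geometry of a single orbit; it gives no control over the common stabiliser of a generic \emph{pair} of points of $X$, so the linear statement you want to quote would not yield the nonlinear one even if it were available. (A secondary issue: Popov's criterion that a reductive s.~g.~p. forces stability is proved for factorial varieties, and $X\times X$ for an arbitrary affine $X$ need not be factorial.)

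You have also misremembered the key input. The result of Panyushev cited in the paper (Proposition 1.6 of~\cite{PanyushevSelfConjugate}) is not a statement about the s.~g.~p. of $G:V\oplus V$; it asserts that the \emph{doubled action} $G:X\times X^{\theta}$ is stable for an arbitrary affine $G$-variety $X$, where $\theta$ is the Weyl involution and $X^{\theta}$ denotes $X$ with the $\theta$-twisted action. This is what lets the paper avoid any slice or s.~g.~p. argument entirely: embed $X\hookrightarrow V$ equivariantly; self-conjugacy of all representations gives a $G$-equivariant isomorphism $V\to V^{\theta}$, hence an isomorphism of the diagonal action $G:X\times X$ with $G:X\times X^{\theta}$, and the latter is stable by Panyushev's theorem. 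Your architecture (linear model plus slice transfer) cannot be repaired as stated; what is needed is Panyushev's result in its variety form combined with the twisting isomorphism.
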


The author has considered several examples of actions of groups~$G$ that have linear representations which are not self-conjugate. These examples suggest that if~$G$ is simple then it is superfluous to suppose that all linear representations of~$G$ are self-conjugate.
\begin{hyp}
If~$G$ is a simple group then~$s_m(G) = m(G)$.
\end{hyp}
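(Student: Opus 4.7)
The conjecture, combined with Theorem~\ref{s_m-lower-bound}, asserts $s_m(G) = m(G)$ for simple $G$; since that theorem already supplies $m(G) \leq s_m(G)$, the substance is the upper bound $s_m(G) \leq m(G)$. Theorem~\ref{s_s-upper-bound} currently yields only $s_m(G) \leq e(G)\,m(G)$, so the task is to eliminate the factor $e(G)$. The plan is to reduce to linear actions and then produce enough invariants on $V^{m(G)}$ to force generic orbits to be closed, rather than merely forcing the stabiliser in general position to be finite.

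The first step is a reduction. By Luna's étale slice theorem, applied at a generic point of a closed orbit of maximal dimension in $X$, the closedness of generic orbits on $X^{m(G)}$ is controlled by the slice representation of the stabiliser of such a point. Provided one has the linear version of the conjecture for this class of reductive subgroups, the question reduces to the following: for every simple $G$ and every effective linear $G$-representation~$V$, the diagonal action $G : V^{m(G)}$ is stable.

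For the linear case I would exploit the balanced collection $w_1,\dots,w_{m(G)} \in \WW$ supplied by Theorem~\ref{existence-of-balanced-tuples}. For an irreducible summand $V_\lambda \subseteq V$ with highest-weight vector $v_\lambda$, the tuple $\bigl(w_1 v_\lambda, \dots, w_{m(G)} v_\lambda\bigr) \in V_\lambda^{m(G)}$ has component weights summing to zero, so the Hilbert--Mumford criterion rules out any one-parameter subgroup of $T$ destabilising it. Combining this with the fact that every tuple in an open subset of $V^{m(G)}$ can be conjugated by $G$ into such a ``weight-balanced'' normal form should extend closedness to a Zariski-dense collection of orbits. Alternatively, one may seek to show directly that the stabiliser in general position on $V^{m(G)}$ is reductive and invoke Popov's criterion from~\cite{PopovStabilityOnFactorialManifold}.

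The main obstacle is the leap from tensor invariants to closedness of orbits. A single invariant tensor in $V^{\otimes m(G)}$ produces only one polynomial invariant of multi-degree $(1,\dots,1)$ on $V^{m(G)}$, which separates generic orbits from the origin but not from other points of their closures; this is precisely the gap that makes the bound in Theorem~\ref{s_s-upper-bound} quadratic in $m(G)$. For non-self-conjugate representations the Panyushev-style argument underlying Theorem~\ref{s_m-for-self-conjugate} is not available, so a genuinely new construction—for example, producing additional invariants in $V^{\otimes m(G)} \otimes (V^*)^{\otimes j}$ for small $j$, or a direct verification that the Weyl-balanced tuples sit in an open subset of closed orbits—appears to be required. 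A reasonable first test is to check the conjecture case by case along Theorem~\ref{M-of-simple-groups}, starting from the defining representation of each simple $G$, where the classical invariants (e.g.\ $\det$ on $(\kk^n)^n$ for $\SL_n$) already confirm the predicted equality.
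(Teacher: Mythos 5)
Note first that the statement you are asked to prove is presented in the paper as an open \emph{conjecture}: the author offers no proof, only the remark that several examples with non-self-conjugate representations support it. So there is no argument in the paper to compare yours against, and your proposal would need to stand entirely on its own. It does not: what you have written is a research plan whose two load-bearing steps are both unsupported, as you partly acknowledge yourself.

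Concretely, the gaps are these. First, the Luna-slice reduction is circular as stated: the slice representation at a generic closed orbit is a representation of a reductive stabiliser $H$ that need not be simple, and $m(H)$ need bear no relation to $m(G)$, so ``the linear version of the conjecture for this class of reductive subgroups'' is a strictly stronger and unformulated statement. Second, and more seriously, the Hilbert--Mumford step fails in two ways. Checking that no one-parameter subgroup \emph{of $T$} destabilises the tuple $\left(w_1 v_\lambda, \dots, w_{m(G)} v_\lambda\right)$ is not enough to conclude that its orbit is closed: the Kempf--Rousseau destabilising one-parameter subgroup of a point lies in \emph{some} maximal torus, not necessarily in the one for which the point happens to be a sum of weight vectors, so you would have to control $g \cdot \left(w_1 v_\lambda, \dots\right)$ for all $g \in G$. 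And even granting closedness of that single orbit, the claim that a generic tuple in $V^{m(G)}$ can be $G$-conjugated into such a ``weight-balanced normal form'' is false in general: the set of tuples all of whose components are $T$-weight vectors is a finite union of products of weight spaces, a proper closed subvariety whose $G$-saturation has dimension at most $\dim G$ plus the number of weights, which is typically far below $m(G)\dim V$. (A closed orbit also does not imply that \emph{generic} orbits are closed, which is what stability requires.) Your final paragraph correctly identifies that a genuinely new idea is needed to pass from a single invariant tensor of multidegree $(1,\dots,1)$ to closedness of generic orbits; until that idea is supplied, the conjecture remains open, and the honest conclusion of your note should be that you have reproved the easy inequality $m(G) \leq s_m(G)$ (Theorem~\ref{s_m-lower-bound}) and verified the reverse inequality only in the classical examples where explicit invariants such as $\det$ on $\left( \kk^n \right)^n$ are available.
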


The author would like to thank I.~V.~Arzhantsev for stating the problem and for many helpful discussions. The idea of applying PRV-theorem to calculation of semigroups~$M(G)$ is due to D.~A.~Timashev. The author would also like to thank V.~L.~Popov for his valuable comments.

\section{Calculation of semigroups $M(G)$}\label{calculation-of-M}

\subsection{Auxiliary statements}

Demonstration of Theorem~\ref{M-of-simple-groups} relies on PRV-theorem on extre\-mal weights of submodules in tensor product of irreducible modules. Let us recall necessary defini\-tions and facts.

Denote~$\WW$ the Weyl group of~$G$ and let~$V(\lambda)$ be the irreducible~$G$-module with highest weight~$\lambda$. Let~$\tau$ be a weight occurring in~$V(\lambda)$. The weight~$\tau$ is said to be \textit{extremal} if it is~$\WW$-equivalent to~$\lambda$. Since every weight is~$\WW$-equivalent to a unique dominant weight, the module~$V(\lambda)$ is uniquely determined by any of its extremal weights. This observation permits us to define~$V(\tau)$ with~$\tau$ not necessarily dominant. The following statement is called PRV-theorem; it partially describes the decomposition of tensor product of two irreducible modules.

\begin{thm}{\rm (\cite{PRV}, \cite{MathieuPRV})}
Let~$\lambda$ and~$\mu$ be arbitrary weights. Then the tensor product~$V(\lambda) \otimes V(\mu)$ contains the irreducible submodule~$V(\lambda + \mu)$.
\end{thm}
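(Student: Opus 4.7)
The plan is to reduce the statement to the \emph{classical} PRV theorem for dominant weights and to quote that classical result from \cite{PRV,MathieuPRV}. Recall the classical PRV theorem: if $\lambda^+$ and $\mu^+$ are dominant and $w$ is any element of $\WW$, then $V(\lambda^+) \otimes V(\mu^+)$ contains the irreducible submodule whose highest weight is the unique dominant weight in the~$\WW$-orbit of~$\lambda^+ + w\mu^+$. The present formulation merely packages this result in the extended notation $V(\tau)$ for not necessarily dominant~$\tau$.

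First I would observe that, by the very definition of $V(\tau)$ via extremal weights, the module~$V(\tau)$ depends only on the~$\WW$-orbit of~$\tau$; in particular $V(\lambda)=V(\lambda^+)$ and $V(\mu)=V(\mu^+)$, where $\lambda^+,\mu^+$ are the dominant representatives and $\lambda=w_1\lambda^+$, $\mu=w_2\mu^+$ for some $w_1,w_2\in\WW$. Thus the left-hand side~$V(\lambda)\otimes V(\mu)$ is intrinsic to the pair of orbits, and we must produce inside it a copy of the irreducible whose extremal weight is~$\lambda+\mu$, i.e.\ the irreducible $V(\nu)$ where $\nu$ is the dominant representative of the~$\WW$-orbit of~$\lambda+\mu$.

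Next I would apply the classical PRV theorem to $\lambda^+,\mu^+$ with the choice $w := w_1^{-1}w_2$. This gives an irreducible submodule of $V(\lambda^+) \otimes V(\mu^+)$ whose highest weight is the dominant representative of
\begin{equation*}
\lambda^+ + w\,\mu^+ \;=\; w_1^{-1}\bigl(w_1\lambda^+ + w_2\mu^+\bigr) \;=\; w_1^{-1}(\lambda+\mu).
\end{equation*}
Since $w_1^{-1}(\lambda+\mu)$ and $\lambda+\mu$ lie in the same $\WW$-orbit, their dominant representatives coincide, so the submodule produced is exactly $V(\lambda+\mu)$ in the extended notation.

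The main obstacle is of course the classical PRV theorem itself, which lies well beyond a short sketch: the original PRV paper \cite{PRV} settled it in special cases, and the full statement was established by Kumar via cohomology vanishing for line bundles on Schubert varieties and, independently, by Mathieu \cite{MathieuPRV} via Frobenius splitting techniques in positive characteristic. For the purposes of this paper I would simply cite those sources; the only genuinely new content of the statement as written here is the cosmetic (but convenient) reformulation from dominant parameters $(\lambda^+,\mu^+,w)$ to an arbitrary pair $(\lambda,\mu)$ of weights, and that reformulation is the elementary orbit-comparison carried out above.
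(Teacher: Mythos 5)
Your proposal is correct and takes essentially the same route as the paper, which states this theorem purely by citation to \cite{PRV} and \cite{MathieuPRV} without any argument of its own. The only content you add is the elementary orbit-comparison reducing the arbitrary-weights formulation to the classical dominant-weight-plus-Weyl-element formulation (via $w = w_1^{-1}w_2$), and that reduction is carried out correctly.
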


PRV-theorem establishes the following relation between lengths of balanced collections in~$\WW$ and elements of~$M(G)$.
\begin{lemma}\label{length-of-balanced-tuple-is-in-M}
Let~$w_1, \dots, w_m \in \WW$ be a balanced collection of~$m$ elements. Then $M(G) \supseteq m\NN$.
\end{lemma}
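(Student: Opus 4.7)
The plan is to apply PRV-theorem iteratively to the extremal weights $w_1 \lambda, \dots, w_m \lambda$ of an irreducible module $V(\lambda)$, and then deduce the general case from the remark preceding Theorem~\ref{M-of-simple-groups}.

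First I would reduce to the case of irreducible modules. Fix $k \in \NN$ and let $V$ be an arbitrary nonzero rational $G$-module; pick any irreducible submodule $U = V(\lambda) \subseteq V$. Since $\left( V^{\otimes mk} \right)^G \supseteq \left( U^{\otimes mk} \right)^G$, it is enough to exhibit a nonzero $G$-invariant in $V(\lambda)^{\otimes mk}$.

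Next I would handle the case $k = 1$. Each weight $w_i \lambda$ is $\WW$-equivalent to $\lambda$, hence extremal in $V(\lambda)$, so by the convention introduced before the statement of PRV-theorem we have $V(w_i \lambda) = V(\lambda)$. Applying PRV-theorem successively to the factors of the tensor product
\begin{equation*}
V(\lambda)^{\otimes m} \isom V(w_1 \lambda) \otimes V(w_2 \lambda) \otimes \dots \otimes V(w_m \lambda),
\end{equation*}
one obtains a chain of irreducible submodules
\begin{equation*}
V(w_1 \lambda + w_2 \lambda) \subseteq V(w_1 \lambda + w_2 \lambda) \otimes V(w_3 \lambda) \supseteq V(w_1 \lambda + w_2 \lambda + w_3 \lambda) \subseteq \dots,
\end{equation*}
leading to $V \left( \sum_{i=1}^m w_i \lambda \right) \subseteq V(\lambda)^{\otimes m}$. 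The balanced condition gives $\sum_{i=1}^m w_i \lambda = \left( \sum_{i=1}^m w_i \right) \lambda = 0$, so the submodule on the left is the trivial one-dimensional module $V(0)$. Consequently $V(\lambda)^{\otimes m}$ contains a nonzero $G$-invariant vector $v$.

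Finally, to pass from $m$ to an arbitrary multiple $mk$, I would simply take $v^{\otimes k} \in V(\lambda)^{\otimes mk}$: it is again a nonzero $G$-invariant. Combined with the reduction to irreducibles above, this shows $mk \in M(G)$ for every $k \in \NN$, i.e. $m \NN \subseteq M(G)$. The only subtlety to guard against is applying PRV-theorem to non-dominant weights such as $w_1 \lambda + w_2 \lambda$; this is legitimate precisely because the theorem as cited is stated for arbitrary weights $\lambda, \mu$, with $V(\tau)$ defined via extremal weights.
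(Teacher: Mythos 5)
Your proposal is correct and follows essentially the same route as the paper: decompose $V(\lambda)^{\otimes m}$ as $V(w_1\lambda)\otimes\dots\otimes V(w_m\lambda)$, apply PRV-theorem (iteratively) to extract $V\left(\sum_i w_i\lambda\right)=V(0)$, and reduce to irreducibles via the remark following the definition of $M(G)$. The only additional content is your explicit treatment of the multiples $mk$ via $v^{\otimes k}$, which the paper leaves implicit; your chain of inclusions has a small notational slip (the first term should be $V(w_1\lambda)\otimes V(w_2\lambda)\supseteq V(w_1\lambda+w_2\lambda)$) but the argument is sound.
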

\begin{proof}
Take any dominant weight~$\lambda$ and the irreducible module~$V(\lambda)$ which corresponds to it. We have $V\left( \lambda \right)^{ \otimes m} = V\left(w_1\lambda\right) \otimes V\left(w_2\lambda\right) \otimes \dots \otimes V\left(w_m\lambda\right)$. It follows from PRV-theorem that this module contains the submodule with extremal weight
\begin{equation*}
w_1\lambda + w_2\lambda + \dots + w_m\lambda = \left(w_1 + w_2 + \dots + w_n \right)\lambda= 0.
\end{equation*}
Hence we have $\left( V\left(\lambda\right)^{\otimes m} \right)^G \neq \left\{0\right\}$ and $M(G) \supseteq m\NN$.
\end{proof}

The above lemma proves one of implications of Theorem~\ref{existence-of-balanced-tuples}. The other implication, namely existence of balanced collections of~$m$ elements with~$m \in M(G)$ will be derived from proof of Theorem~\ref{M-of-simple-groups}.

The following statement is in most of the cases sufficient to prove that a given number~$m$ does not belong to~$M(G)$.
\begin{lemma}\label{center-restricts-M}
Let~$Z(G)$ be the center of~$G$ and let~$H \subseteq Z(G)$ be a cyclic subgroup of order~$m$ Then~$M(G) \subseteq m\NN$.
\end{lemma}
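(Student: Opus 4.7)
The plan is to produce a single irreducible $G$-module $V$ on which a generator of $H$ acts by a primitive $m$-th root of unity; then the action on any tensor power $V^{\otimes n}$ is by $\zeta^n$, and the existence of a nonzero $G$-invariant vector forces $m \mid n$.

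First I would fix a maximal torus $T \subseteq G$ containing $H$ (any cyclic subgroup of $Z(G)$ is contained in some, hence, up to conjugation, in our chosen maximal torus). Since $T$ is a torus and $H \subseteq T$ is a finite subgroup, the restriction map $\hat{T} \to \hat{H} \isom \ZZ/m\ZZ$ on character groups is surjective: indeed, identifying $T \isom (\kk^*)^r$ and writing $H$ as the kernel of some surjection $T \to (\kk^*)^r$, one sees that the induced map on character lattices realises $\hat{H}$ as a quotient of $\hat{T}$.

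Next, let $z$ be a generator of $H$, and pick a weight $\chi \in \hat{T}$ whose image in $\hat{H}$ is a generator, so that $\chi(z)$ is a primitive $m$-th root of unity. Let $\lambda$ be the unique dominant weight in the $\WW$-orbit of $\chi$, and consider the irreducible module $V(\lambda)$. Because $z$ lies in the centre of $G$, it acts on $V(\lambda)$ by a $G$-equivariant endomorphism, hence by a scalar by Schur's lemma. The value of this scalar is read off from the highest weight line: it equals $\lambda(z)$, and since $\lambda$ and $\chi$ are $\WW$-conjugate while $z$ is central (so all elements of the $\WW$-orbit of a weight take the same value on $z$), this scalar equals $\chi(z)$, a primitive $m$-th root of unity.

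Finally, suppose $n \in M(G)$. Applied to $V = V(\lambda)$, this gives a nonzero vector $v \in \left( V(\lambda)^{\otimes n} \right)^G$. The element $z$ acts on $V(\lambda)^{\otimes n}$ as the scalar $\chi(z)^n$, so $v = z \cdot v = \chi(z)^n v$ forces $\chi(z)^n = 1$, i.e.\ $m \mid n$. Hence $M(G) \subseteq m\NN$. The only non-routine point is the surjectivity of $\hat{T} \to \hat{H}$, and that is standard for finite subgroups of algebraic tori.
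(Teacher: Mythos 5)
Your argument is correct, and its engine is the same as the paper's: a central element of order $m$ acts on a suitable irreducible module by a primitive $m$-th root of unity (Schur's lemma), hence on the $n$-th tensor power by a nontrivial scalar whenever $m \nmid n$, which precludes nonzero invariants. Where you differ is in how the module is produced. The paper simply asserts that $G$ has a faithful irreducible representation $U$ and observes that $H$ then acts faithfully by scalars on $U$; you instead extend a generating character of $\hat{H}$ to the maximal torus (surjectivity of $\hat{T} \to \hat{H}$ is indeed standard, since $H$ is a closed subgroup of a diagonalizable group and character groups form an exact anti-equivalence), pass to the dominant $\WW$-representative $\lambda$, and take $V(\lambda)$, using that $\WW$ fixes central elements to get $\lambda(z) = \chi(z)$. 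Your detour through the weight lattice is slightly longer but more robust: a faithful \emph{irreducible} representation of $G$ exists only when $Z(G)$ is cyclic (it fails for $\Spin_{4n+4}$, whose centre is $\ZZ_2 \oplus \ZZ_2$), whereas what the proof actually needs --- an irreducible module on which the cyclic subgroup $H$ acts through a character of exact order $m$ --- is precisely what your construction delivers without any hypothesis on $Z(G)$. The remaining steps (reading the scalar off the highest weight line, the divisibility conclusion) are routine and correctly executed.
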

\begin{proof}
The group~$G$ has a faithful irreducible representation, therefore there exists a simple~$G$-module~$U$ such that~$H$ is faithfully represented in~$U$. The module~$U$ is irreducible with respect to~$G$ and the action of~$H$ commutes with that of~$G$. Therefore~$H$ acts by multiplica\-tions by powers of a $m$-th root of unity. Faithfulness of representation of~$H$ implies that one of its generators~$x_0$ acts by multiplication by a $m$-th root of unity; denote this root~$\varepsilon$. In every tensor power~$U^{\otimes k}$ the generator~$x_0$ acts by multiplication by~$\varepsilon^k$. Therefore if~$k$ is not divisable by~$m$ then~$H$ acts in~$U^{\otimes k}$ by nontrivial multiplications and~$U^{\otimes k}$ has no~$G$-invariant elements. It implies that~$M(G) \subseteq m\NN$.
\end{proof}

While proving Theorem~\ref{M-of-simple-groups} we will construct balanced collections in Weyl groups. Their construction in cases of Weyl groups of types~~$\F_4$, $\E_6$ и~$\E_8$ relies heavily on properties of Coxeter elements of these Weyl groups. Let us recall the definition of Coxeter element. Let~$\WW$ be the Weyl group corresponding to an irreducible essential root system~$\Phi$. The product of reflections corresponding to all simple roots in~$\Psi$ is called a \textit{Coxeter element} of~$\WW$. This definition depends on ordering of simple reflections, but all elements obtained in such way are conjugate in~$\WW$; therefore they all have the same order and the same eigenvalues. Later on by Coxeter element we mean any Coxeter element of~$\WW$.
\begin{thm}\label{order-of-coxeter-element}\cite[Proposition~3.18~и~Theorem~3.19]{Humphreys}
Let~$\Phi$ be an irreducible essential root system. Then order of its Coxeter element is~$h = \left| \Phi \right| \big/ \rk \Phi$.

Let~$r$ be the rank of~$\Phi$ and~$\exp \left( 2 \pi i m_1 / h \right)$, $ \dots $, $\exp \left( 2 \pi i m_r / h \right)$ be all eigenvalues of Coxeter element of~$\WW$ ($0 \leq m_i < h$). Then order of the Weyl group~$\left| \WW \right|$ equals $\prod\limits_i \left( m_i + 1 \right)$.
\end{thm}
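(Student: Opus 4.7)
The statement is the classical Coxeter–Steinberg theorem on the spectrum of Coxeter elements, so my plan is to follow the standard route from Bourbaki and Humphreys rather than invent something new.

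The first step is to produce a convenient Coxeter element. Because the Coxeter graph of an irreducible root system is a tree, the set of simple roots admits a bipartition $\{\alpha_1,\dots,\alpha_r\} = A \sqcup B$ with roots inside $A$ (and inside $B$) pairwise orthogonal. Setting $c_A = \prod_{\alpha \in A} s_\alpha$ and $c_B = \prod_{\alpha \in B} s_\alpha$, both are involutions and $c = c_A c_B$ is a Coxeter element. Since all Coxeter elements are conjugate in $\WW$ (orderings of the simple reflections differ by braid moves that preserve conjugacy class), it suffices to analyse this particular $c$. To pin down its spectrum, I would apply Perron–Frobenius to the non-negative matrix obtained from the Cartan matrix of the bipartite system: its dominant eigenvalue has the form $2\cos(\pi/h)$ for a positive integer $h$, and the corresponding real eigenvectors of $c_A$ and $c_B$ span a two-dimensional $c$-invariant subspace (the Coxeter plane) on which $c_A$ and $c_B$ act as reflections across lines at angle $\pi/h$, so $c$ acts as rotation by $2\pi/h$.

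The remaining spectral decomposition of $V$ into $c$-invariant subspaces yields the full eigenvalue list $\exp(2\pi i m_k/h)$ with $0 < m_k < h$, showing at once that $c$ has order exactly $h$. Because $c$ has no fixed vector in $V$, the cyclic group $\langle c \rangle$ acts freely on $\Phi$; projecting roots onto the Coxeter plane separates the orbits, so one obtains exactly $r$ orbits, each of length $h$, giving $h = |\Phi|/\rk\Phi$.

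For the product formula $|\WW| = \prod_i (m_i+1)$, the plan is to invoke the Chevalley–Shephard–Todd theorem: because $\WW$ is generated by reflections, the invariant ring $\kk[V]^{\WW}$ is a polynomial algebra in $r$ homogeneous generators of some degrees $d_1,\dots,d_r$, with $|\WW| = \prod_i d_i$. The final step is to identify $d_i = m_i+1$. I would do this via Molien's formula: the Hilbert series $\sum_n \dim \kk[V]^{\WW}_n\, t^n$ equals both $\prod_i (1-t^{d_i})^{-1}$ and $\tfrac{1}{|\WW|}\sum_{w\in\WW}\det(1-tw|V)^{-1}$, and isolating the contribution of $c$—whose eigenvalues are now explicit—forces the multisets $\{d_i\}$ and $\{m_i+1\}$ to agree. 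The hardest part of the argument is locating the Coxeter plane and verifying that the restriction of $c$ to it is a primitive $h$-th rotation; every subsequent assertion in the theorem reduces to that spectral input.
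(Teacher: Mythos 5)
This theorem is not proved in the paper at all: it is quoted from Humphreys (Proposition 3.18 and Theorem 3.19) and used as a black box in the computation of $M(\F_4)$, $M(\E_6)$ and $M(\E_8)$, so there is no in-paper argument to measure yours against. Your outline is, as you announce, the standard Bourbaki--Humphreys route that the citation points to: the bipartite Coxeter element $c=c_Ac_B$, the Perron--Frobenius eigenvector and the resulting Coxeter plane on which $c$ acts as a rotation of exact order $h$, the orbit count on $\Phi$ giving $h=\left|\Phi\right|/\rk\Phi$, and then Chevalley's theorem together with the Molien series to get $\left|\WW\right|=\prod_i(m_i+1)$. At that level of granularity the plan is correct and is essentially a reconstruction of the cited proof rather than an alternative to anything in the paper.

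Two of your bridging sentences would not survive being expanded, however. First, ``because $c$ has no fixed vector in $V$, the cyclic group $\langle c\rangle$ acts freely on $\Phi$'' is a non sequitur: freeness requires that no proper power $c^k$ with $0<k<h$ fix a root, and such powers can certainly have eigenvalue $1$ on $V$ (for $D_4$ the exponents are $1,3,3,5$ and $h=6$, so $c^2$ has a two-dimensional fixed subspace). The correct mechanism is the one you state in the following clause --- no root is orthogonal to the Coxeter plane, and on that plane every proper power of $c$ is a nontrivial rotation --- but then the ``no fixed vector'' clause does no work, and the claim that there are \emph{exactly} $r$ orbits still needs its own argument (in Humphreys it comes from counting how the projected root lines interleave with the reflecting lines of the dihedral group of order $2h$ generated by $c_A$ and $c_B$ on the plane); you cannot deduce the number of orbits from $\left|\Phi\right|=rh$, since that equality is what you are proving. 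Second, ``isolating the contribution of $c$'' in Molien's formula does not by itself force the multiset $\{d_i\}$ to equal $\{m_i+1\}$: the identification of exponents with degrees minus one requires either Solomon's full expansion of the Molien series near $t=1$ or the Coleman--Springer regular-element argument. Both gaps are repairable from the cited literature and neither affects the paper, which only needs the statement itself.
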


The numbers~$m_i$ defined in the theorem above are called \textit{exponents} of the Weyl group~$\WW$. In cases that we consider the exponents can be calculated by applying the following statement.

\begin{lemma}\label{good-m-is-exponent}\cite[Proposition~3.20]{Humphreys}
Let~$\Phi$ be an irreducible essential root system, let $h$ be order of its Coxeter element and~$m$ be any natural number that is not greater than~$h$. Suppose additionally that~$m$ and~$h$ are coprime. Then~$m$ is one of exponents of the Weyl group corresponding to~$\Phi$.
\end{lemma}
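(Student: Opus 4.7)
The plan is to exploit the description of the eigenvalues of the Coxeter element given in Theorem~\ref{order-of-coxeter-element} together with a Galois-theoretic argument, using that the Coxeter element acts on the root lattice by an integer matrix.

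More precisely, let $c$ be a Coxeter element of $\WW$, viewed as an endomorphism of the $\QQ$-span of $\Phi$. Because $c$ is a product of simple reflections, it preserves the root lattice, and hence its characteristic polynomial $\chi_c(t) \in \ZZ[t]$ has integer coefficients. By Theorem~\ref{order-of-coxeter-element} the roots of $\chi_c$ are
\begin{equation*}
\zeta^{m_1}, \ \zeta^{m_2}, \ \dots, \ \zeta^{m_r}, \qquad \zeta := \exp(2 \pi i / h), \quad 0 \leq m_j < h.
\end{equation*}
Since the multiset of roots of $\chi_c$ is defined over $\QQ$, it is invariant under the Galois group $\mathrm{Gal}(\QQ(\zeta)/\QQ) \cong (\ZZ/h\ZZ)^*$, whose elements act by $\zeta \mapsto \zeta^a$ for $a$ coprime to $h$.

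The first key step is to observe that the order of $c$ is exactly $h$, so the least common multiple of the orders of the eigenvalues $\zeta^{m_j}$ equals $h$. This forces at least one index $j_0$ for which $\zeta^{m_{j_0}}$ has order exactly $h$, i.e.\ for which $\gcd(m_{j_0}, h) = 1$. The second key step is then purely formal: applying the Galois action to the eigenvalue $\zeta^{m_{j_0}}$, we conclude that every primitive $h$-th root of unity $\zeta^m$ with $\gcd(m, h) = 1$ and $1 \leq m < h$ occurs among $\zeta^{m_1}, \dots, \zeta^{m_r}$. Since $m$ and $m_j$ both lie in the range $[1, h-1]$, equality of $\zeta^m$ and $\zeta^{m_j}$ implies $m = m_j$, so $m$ is an exponent. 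The edge case $m = h$ can occur only when $h = 1$, which is trivial.

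The only delicate point is the first key step, i.e.\ certifying that $c$ has some eigenvalue of exact order $h$. This is immediate from the fact that $c$ acts by an invertible linear transformation whose multiplicative order equals $h$: the order of a diagonalisable matrix is the least common multiple of the orders of its eigenvalues, so one of them must be a primitive $h$-th root of unity. Everything else is bookkeeping with cyclotomic Galois theory.
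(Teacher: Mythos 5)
The paper does not prove this lemma at all --- it is quoted directly from Humphreys \cite[Proposition~3.20]{Humphreys} --- so there is no internal proof to compare against. Your argument is nevertheless complete and correct, and it is essentially the standard one: the characteristic polynomial of the Coxeter element lies in $\ZZ[t]$ because $c$ preserves the root lattice, so the multiset of eigenvalues is stable under $\mathrm{Gal}\left(\QQ(\zeta)/\QQ\right) \cong \left(\ZZ/h\ZZ\right)^*$, and once one eigenvalue is a primitive $h$-th root of unity the whole Galois orbit $\left\{ \zeta^{m} : \gcd(m,h)=1 \right\}$ appears among the $\zeta^{m_j}$, forcing each such $m$ to be an exponent. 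The only point worth flagging is your ``first key step'': you correctly justify the existence of a primitive $h$-th eigenvalue from the fact that $c$ is diagonalisable (finite order in characteristic zero) and has order exactly $h$; Humphreys instead gets the seed eigenvalue for free from the earlier fact that $1$ is always an exponent, but your route is equally valid and self-contained given Theorem~\ref{order-of-coxeter-element}.
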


In many cases the following statement can be used to prove that specific powers of Coxeter elements make up a balanced collection.
\begin{lemma}\label{transform-of-order-3}
Let~$\WW$ be the Weyl group of~$G$. Suppose that an element~$w \in \WW$ has order~$3$ and that~$1$ is not eigenvalue of~$w$. Then~$\left\{ w, w^2, w^3 \right\}$ is a balanced collection in~$\WW$. Furthermore~$M(G) \supseteq 3\NN$.
\end{lemma}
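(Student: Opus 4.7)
The plan is to reduce the first assertion to a statement about the minimal polynomial of $w$ acting on the $\QQ$-span of the roots. Since $w$ has order~$3$, we have $w^3 = \Id$, so the minimal polynomial $\mu_w(x)$ divides $x^3 - 1 = (x-1)(x^2 + x + 1)$. The hypothesis that $1$ is not an eigenvalue of $w$ rules out the factor $(x-1)$, so $\mu_w(x)$ must divide the irreducible polynomial $x^2 + x + 1$. Hence $\mu_w(x) = x^2 + x + 1$ and we obtain the identity $w^2 + w + \Id = 0$ as endomorphisms of $\QQ\langle\Phi\rangle$.

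Now I would simply rewrite this identity: using $w^3 = \Id$, it becomes
\begin{equation*}
w + w^2 + w^3 = 0,
\end{equation*}
which is exactly the statement that $\{w, w^2, w^3\}$ is a balanced collection in $\WW$ of length~$3$.

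For the second assertion, I would invoke Lemma~\ref{length-of-balanced-tuple-is-in-M} applied to this balanced triple: the existence of a balanced collection of length~$m = 3$ immediately gives $M(G) \supseteq 3\NN$.

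No serious obstacle is expected here; the only content is the elementary observation that removing the eigenvalue $1$ from $x^3 - 1$ forces the cyclotomic factor $x^2 + x + 1$ to annihilate $w$. The lemma serves as a convenient packaging result that will subsequently be applied to Coxeter (or power-of-Coxeter) elements in types $\F_4$, $\E_6$, $\E_8$, where one needs to verify order~$3$ and the absence of eigenvalue~$1$ using Theorem~\ref{order-of-coxeter-element}.
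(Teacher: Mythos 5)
Your proof is correct and amounts to the same elementary observation as the paper's: the paper deduces $\Id + w + w^2 = 0$ by noting that $\left( \Id + w + w^2 \right)x$ is $w$-invariant for every $x$ and hence zero since $1$ is not an eigenvalue, while you phrase the identical fact via the minimal polynomial dividing $x^2 + x + 1$. Both then obtain $M(G) \supseteq 3\NN$ from Lemma~\ref{length-of-balanced-tuple-is-in-M}, so there is nothing to correct.
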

\begin{proof}
Note that for every ~$x \in \RR^{\rk G}$ the element~$\left( \Id + w + w^2 \right)x$ is $w$-invariant and therefore zero. That is why $\Id + w + w^2 = 0$ and $M(G) \supseteq 3\NN$.
\end{proof}

\subsection{Calculation of $M(G)$ for simple groups $G$}
\

\prooflike{Proof of Theorem~\ref{M-of-simple-groups}} \textit{Case 1:~$G = \SL_n$}. Let~$e_i$ be the vectors of the standard basis of~$\RR^n$. Simple roots of the system~$A_{n-1}$ are the vectors~$e_1 - e_2$, $e_2 - e_3$, $\dots$, $e_{n-1}-e_n$, the Weyl group of~$A_{n-1}$ is the symmetric group~$S_n$ and it acts in $\RR^n$ by permuting the coordinates. Denote~$\varepsilon \in \WW$ the cyclic permutation~$\left( 123 \dots n \right)$. We have
\begin{equation*}
\varepsilon + \varepsilon^2 + \dots + \varepsilon^n = \begin{pmatrix}
1			&	1			&	\dots	&	1			\\
1			&	1			&	\dots	&	1			\\
\vdots	&	\vdots	&	\ddots	&	\vdots	\\
1			&	1			&	\dots	&	1			\\
\end{pmatrix},
\end{equation*}
(the above sum is considered as a sum in $\End\left(\RR^n\right)$).

The restriction of this operator to the span of simple roots is zero. Indeed, the span of simple roots is the subspace~$\left\{ x_1 + x_2 + \dots + x_n = 0 \right\}$ and all such vectors are taken to zero by~$\varepsilon + \varepsilon^2 + \dots + \varepsilon^n$. By Lemma~\ref{length-of-balanced-tuple-is-in-M} we have $M\left( \SL_n \right) \supseteq n\NN$. The centre of~$\SL_n$ is isomorphic to the group of $n$-th roots of unity, thus by applying Lemma~\ref{center-restricts-M} we get the reverse inclusion $M\left( \SL_n \right) \subseteq n\NN$.

It is important to remark that the cyclic permutation~$\varepsilon$ is Coxeter element of the root system~$A_{n-1}$.

\textit{Case 2:~$G = \Spin_{2n+1}$ or~$G = \Sp_{2n}$}. In these two cases the Weyl group is~$S_n \rightthreetimes \left\{ \pm 1 \right\}^n$ and it acts in~$\RR^n$ by permuting the coordinates and changing signs of coordinates. This means that~$-\Id \in \WW$ and by Lemma~\ref{length-of-balanced-tuple-is-in-M} we have the inclusions $M\left( \Spin_{2n+1} \right),\ M\left( \Sp_{2n} \right) \supseteq 2\NN$. Both~$\Spin_{2n+1}$ and~$\Sp_{2n}$ have centres isomorphic to~$\ZZ_2 \left( = \ZZ / 2\ZZ \right)$~\cite[Table~3]{VinbergOnischik} and Lemma~\ref{center-restricts-M} yields the reverse inclusions~$M\left( \Spin_{2n+1} \right),\ M\left( \Sp_{2n} \right) \subseteq 2\NN$.

\textit{Case 3:~$G = \Spin_{2n}$}. The Weyl group of the root system~$D_n$ is~$S_n \rightthreetimes \left\{ \pm 1 \right\}^{n-1}$ and acts in~$\RR^n$ by permuting the coordinates and changing signs of the coordinates in even number of positions. It is necessary to consider two subcases.

If~$n$ is even then~$-\Id \in \WW$ and centre~$Z\left( \Spin_{2n} \right) \isom \ZZ_2 \oplus \ZZ_2$. This yields~$M\left( \Spin_{2n} \right) = 2\NN$.

Now suppose~$n$ is odd. In this case~$Z\left( \Spin_{2n} \right) \isom \ZZ_4$ and we have $M\left( \Spin_{2n} \right) \subseteq 4\NN$. Consider the following four elements of~$\WW$:
\begin{equation*}\begin{matrix}
w_1	&	=	&	\diag(	&	1,		&	-1,	&	-1,	&	-1,	&	\dots,	&	-1	&	),	\\
w_2	&	=	&	\diag(	&	-1,	&	1,		&	-1,	&	-1,	&	\dots,	&	-1	&	),	\\
w_3	&	=	&	\diag(	&	-1,	&	-1,	&	1,		&	1,		&	\dots,	&	1	&	),	\\
w_4	&	=	&	\diag(	&	1,		&	1,		&	1,		&	1,		&	\dots,	&	1	&	).	\\
\end{matrix}
\end{equation*}
These elements add up to zero hence~$M\left( \Spin_{2n} \right) \supseteq 4\NN$.

\textit{Case 4:~$G = \G_2$}. In this case the Weyl group is the dihedral group~$\mathcal{D}_6$ of order~$12$. We have~$-\Id \in \WW$ and $M \left( \G_2 \right) \supseteq 2\NN$. Let~$\varepsilon \in \WW$ be the rotation by~$2\pi / 3$. We have~$\Id + \varepsilon + \varepsilon^2 = 0$ hence $M\left( \G_2 \right) \supseteq 3\NN$. As a result we get $M\left( \G_2 \right) = \left\{ n \in \NN\ |\ n \geq 2 \right\}$.

\textit{Case 5:~$G = \F_4$}. The Weyl group~$\WW$ corresponding to the group~$F_4$ contains~$-\Id$~\cite[Table 1]{VinbergOnischik} hence~$M\left( \F_4 \right) \supseteq 2\NN$. Let~$\varepsilon$ be Coxeter element of~$\WW$. According to Theorem~\ref{order-of-coxeter-element} the element~$\varepsilon$ has order~12 and, according to Lemma~\ref{good-m-is-exponent}, it has~$1, 5, 7, 11$ for exponents. As a result, the element~$\varepsilon^4$ has no real eigenvalues. Applying Lemma~\ref{transform-of-order-3} to~$\varepsilon^4$ we obtain the inclusion $M\left( \G_2 \right) \supseteq 3\NN$. As we can see, $M\left( \F_4 \right) = \left\{ n \in \NN\ |\ n \geq 2 \right\}$.

\textit{Case 6:~$G = \E_6$}. Consider Coxeter element~$\varepsilon$. It has order~12. Unlike the previous case Lemma~\ref{good-m-is-exponent} yields only four exponents~1, 5, 7 and 11. Eigenvalues of Coxeter elements come in pairs~$\lambda$ and~$\bar \lambda$, therefore the remaining two exponents are~$m$ and~$12-m$. Theorem~\ref{order-of-coxeter-element} states that order of the Weyl group~$\left| \WW \right| = 2^7 \cdot 3^4 \cdot 5$ coincides with product $2 \cdot 12 \cdot 6 \cdot 8 \cdot (m+1) \cdot (12-m+1)$. From this equality we find the remaining exponents. They are~4 and~8. Thus the element~$\varepsilon^4$ has no real eigenvalues and Lemma~\ref{transform-of-order-3} yields the inclusion~$M\left( \E_6 \right) \supseteq 3\NN$. Centre $Z\left( \E_6 \right)$ is~$\ZZ_3$ hence $M\left( \E_6 \right) = 3\NN$.

\textit{Case 7:~$G = \E_7$}. The Weyl group corresponding to the group~$E_7$ contains the mapping~$-\Id$. Therefore $M\left( \E_7 \right) \supseteq 2\NN$. Centre~$Z\left( \E_7 \right)$ is~$\ZZ_2$ hence $M\left( \E_7 \right) = 2\NN$.

\textit{Suppose 8:~$G = \E_8$}. The Weyl group~$\WW$ corresponding to the group~$E_8$ contains the mapping~$-\Id$. Therefore $M\left( \E_8 \right) \supseteq 2\NN$. Let~$\varepsilon \in \WW$ be Coxeter element. Its order is~30 and Lemma~\ref{good-m-is-exponent} yields its eights exponents which are coprime with~30. As a result the element~$\varepsilon^{10}$ has no real eigenvalues hence Lemma~\ref{transform-of-order-3} is applicable to it. In this way we obtain the inclusion~$M(\E_8) \supseteq 3\NN$ and it proves that $M\left( \E_8 \right) = \left\{ n \in \NN\ |\ n \geq 2 \right\}$.
\endproof

\prooflike{Proof of Theorem~\ref{existence-of-balanced-tuples}}
In view of Lemma~\ref{length-of-balanced-tuple-is-in-M} it remains to prove that if~$m \in M(G)$ then there exists a balanced collection containing~$m$ elements. Such balanced collections have been constructed in the above proof.
\endproof

\subsection{Calculation of $M(G)$ for an arbitrary group $G$}
Let us first prove Proposition~\ref{same-as-reductive} which asserts that semigroups~$M(G)$ need to be calculated only for reductive groups.

\begin{proof}
Inclusion~$M(G) \subseteq M(H)$ is obvous. Indeed, denote~$\pi : G \rightarrow H$ the natural map. Every~$H$-module~$V$ can be considered as a~$G$-module with multiplica\-tion~$g \cdot x = \pi(g)x$ and we have $\left( V^{\otimes m} \right)^H = \left( V^{\otimes m} \right)^G$.

Take~$m \in M(H)$ and a~$G$-module~$V$. Its submodule~$W = V^F$ is nonzero. The unipotent radical~$F$ is a normal subgroup in~$G$, thus the action $G : V$ gives rise to action~$G  : W$. From this statement it follows that~$W$ is also $H$-module; since~$m \in M(H)$ we have $\left( W^{\otimes m} \right)^H \neq \left\{0\right\}$. So we have $\left( V^{\otimes m} \right)^G \supseteq \left( W^{\otimes m} \right)^G = \left( W^{\otimes m} \right)^{G / F} \neq \left\{0\right\}$ and $M(G) \supseteq M(H)$.
\end{proof}

It has already been remarked that a reductive group~$G$ with nontrivial central torus has empty semigroup~$M(G)$. It suffices therefore to calculate~$M(G)$ for semisimple groups~$G$. If~$G$ is semisimple and simply connected then it is a product of several simply connected simple groups and Proposition~\ref{M-of-product} yields~$M(G)$.

\prooflike{Proof of Proposition~\ref{M-of-product}}
Take~$m \in M\left( G_1 \times G_2 \right)$. Let~$V$ and~$W$ be arbitrary modules over~$G_1$ and~$G_2$ respectively. Each of them can be considered as a module over~$G_1 \times G_2$ with trivial action of one of the factors. By choice of~$m$ we have $\left( V^{\otimes m} \right)^{G_1} = \left( V^{\otimes m} \right)^{G_1 \times G_2} \neq \{0\}$ and $\left( W^{\otimes m} \right)^{G_2} = \left( W^{\otimes m} \right)^{G_1 \times G_2} \neq \{0\}$. Thus $m \in M\left( G_1 \right)$ and $m \in M\left( G_2 \right)$ and we obtain the inclusion $M\left( G_1 \times G_2 \right) \subseteq M\left( G_1 \right) \cap M\left( G_2 \right)$.

Conversely, take~$m \in M\left( G_1 \right) \cap M\left( G_2 \right)$ and let~$U$ be an irreducible module over~$G_1 \times G_2$. Both groups~$G_1$ and~$G_2$ are reductive hence~$U = V \otimes W$ for appropriate irreducible modules~$V$ and~$W$ over~$G_1$ and~$G_2$ respectively. We have $U^{\otimes m} \isom V^{\otimes m} \otimes W^{\otimes m}$. In view of choice of~$m$ we have $\left( V^{\otimes m} \right)^{G_1} \neq \{0\}$ and $\left( W^{\otimes m} \right)^{G_2} \neq \{0\}$. As a result $\left( V^{\otimes m} \otimes W^{\otimes m} \right)^{G_1 \times G_2} \neq \{0\}$. It proves that $m \in M\left( G_1 \times G_2 \right)$.
\endproof

\section{Relation between stability indices and $m(G)$}

\subsection{Auxiliary facts about HV-varieties}

In order to prove Theorem~\ref{s_m-lower-bound} we need to give examples of actions $G : X$ such that diagonal actions $G : X^{m(G)-1}$ are not stable. Necessary examples are given by actions on so-called HV-varieties. All facts that we need about these varieties can be found in~\cite{HVmanifolds} and~\cite{Popov}.

Let~$\lambda$ be a dominant weight of~$G$ and~$v_\lambda$ be the highest weight vector in~$V(\lambda)$. Consider the~$G$-orbit of ~$v_\lambda$. Its closure is called a \textit{HV-variety} corresponding to the dominant weight~$\lambda$~\cite[Definition 1]{HVmanifolds}.
\begin{thm}\cite[Theorem 1]{HVmanifolds}
Let~$\lambda$ be a dominant weight of~$G$ and~$v_\lambda$ be the highest weight vector in~$V(\lambda)$. Then $X(\lambda) = G \cdot v_\lambda \cup \left\{0\right\}$.
\end{thm}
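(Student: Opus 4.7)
The plan is to realise~$\HV{\lambda}$ as the affine cone over a closed projective $G$-orbit; the only extra point that one picks up when taking the cone is its apex~$0$, which accounts for the formula. Throughout I assume~$\lambda \neq 0$, the case~$\lambda = 0$ being a degeneracy (then~$G \cdot v_\lambda$ is a single fixed point and the statement should be read modulo this case).

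First I would analyse the stabiliser $P := \{ g \in G \mid g v_\lambda \in \kk v_\lambda \}$ of the line~$\kk v_\lambda$ in~$\mathbb{P}(V(\lambda))$. Because~$v_\lambda$ is a $T$-weight vector and, by maximality of~$\lambda$ among the weights of~$V(\lambda)$, is annihilated by every positive root vector, the subgroup~$P$ contains the Borel $B \supseteq T$ associated with the chosen positive roots. Hence~$P$ is parabolic, so~$G/P$ is complete, and the projective orbit $G \cdot [v_\lambda] \isom G/P$ is closed in~$\mathbb{P}(V(\lambda))$.

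Second, I would exploit~$\lambda \neq 0$ to obtain $T \cdot v_\lambda = \kk^* v_\lambda$: the character $\lambda : T \to \kk^*$ has connected nontrivial image in~$\kk^*$, hence is surjective. Consequently $G \cdot v_\lambda$ is stable under scaling by~$\kk^*$, and the natural projection $V(\lambda) \setminus \{0\} \to \mathbb{P}(V(\lambda))$ restricts to a surjection $G \cdot v_\lambda \to G \cdot [v_\lambda]$ whose fibres are precisely the $\kk^*$-orbits. Therefore $C := G \cdot v_\lambda \cup \{0\}$ coincides with the affine cone over the closed projective variety~$G \cdot [v_\lambda]$, which makes~$C$ closed in~$V(\lambda)$. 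On the other hand~$0 \in \overline{G \cdot v_\lambda}$, because $t v_\lambda \in G \cdot v_\lambda$ for every~$t \in \kk^*$ and $t v_\lambda \to 0$ as~$t \to 0$. Combining both observations yields $\overline{G \cdot v_\lambda} = C$, as required.

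The main obstacle is the parabolicity of~$P$, which rests on careful use of the highest-weight property so that positive root vectors actually stabilise the line~$\kk v_\lambda$ (and not merely some larger subspace). Everything after that step is formal: the cone over a complete projective variety is closed in the ambient vector space, and scaling invariance of~$G \cdot v_\lambda$ identifies our affine orbit with that cone minus its apex, leaving~$\{0\}$ as the single additional limit point.
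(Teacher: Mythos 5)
The paper does not prove this statement; it is quoted verbatim from Popov--Vinberg (\cite{HVmanifolds}), so there is no internal proof to compare against. Your argument is the standard one and is correct: since $v_\lambda$ is killed by the positive root vectors and scaled by $T$, the stabiliser of the line $\kk v_\lambda$ contains a Borel and is therefore parabolic, so the projectivised orbit is complete and closed; and $\kk^*$-invariance of $G \cdot v_\lambda$ (which holds precisely because $\lambda \neq 0$ makes the character $\lambda : T \to \kk^*$ surjective) identifies $G \cdot v_\lambda \cup \{0\}$ with the affine cone over that closed projective orbit, which is closed in $V(\lambda)$ and contains $0$ in the closure of every $\kk^*$-orbit. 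Your caveat that $\lambda = 0$ must be excluded is correct and is implicit in the cited source, where HV-varieties are attached to nonzero dominant weights.
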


A collection $\left( \lambda_1, \dots, \lambda_s \right)$ of dominant weights of~$G$ is said to be \textit{invariant-free}~\cite[Defini\-tion~2]{Popov} if $\left( V\left(n_1\lambda_1\right) \otimes \dots \otimes V\left(n_s\lambda_s\right) \right)^G = \left\{0\right\}$ for every tuple of natural numbers $n_1, \dots, n_s$.
\begin{thm}\cite[Theorem 10]{Popov}\label{inv-free-implies-instability}
Let $\left( \lambda_1, \dots, \lambda_s \right)$ be a collection of dominant weights of~$G$. The following properties are equivalent:
\begin{itemize}
\item the collection $\left( \lambda_1, \dots, \lambda_s \right)$ is invariant-free,
\item the closure of every~$G$-orbit in $\HV{\lambda_1} \times \dots \times \HV{\lambda_s}$ contains $0 \in V\left(\lambda_1\right) \oplus \dots \oplus V\left(\lambda_s\right)$,
\item $\kk\left[\HV{\lambda_1} \times \dots \times \HV{\lambda_s}\right]^G = \kk$.
\end{itemize}
\end{thm}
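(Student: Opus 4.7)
The plan is to translate each of the three conditions into an equivalent statement about the coordinate algebra $A := \kk\left[\HV{\lambda_1} \times \dots \times \HV{\lambda_s}\right]$ viewed as a rational $G$-module, using the standard structural description of $\kk[\HV{\lambda}]$.

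First I would recall from the theory of HV-varieties (\cite{HVmanifolds}) that $\HV{\lambda}$ is the affine cone over the closed $G$-orbit $G \cdot [v_\lambda] \subseteq \mathbb{P}(V(\lambda))$, a generalised flag variety. By Borel--Weil, or equivalently by picking out the Cartan component of each $S^n V(\lambda)^*$, one has the multiplicity-free graded decomposition
\[
\kk[\HV{\lambda}] = \bigoplus_{n \geq 0} V(n\lambda)^*,
\]
and multiplicativity of the coordinate ring across factors yields
\[
A = \bigoplus_{n_1, \dots, n_s \geq 0} V(n_1\lambda_1)^* \otimes \dots \otimes V(n_s\lambda_s)^*.
\]

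For the equivalence $(1) \Leftrightarrow (3)$ I would take $G$-invariants term by term. Since $G$ is reductive, every finite-dimensional rational module $V$ splits off its invariants as a direct summand, whence $\dim V^G = \dim (V^*)^G$. Applied to every summand above, this gives that $A^G$ is a direct sum, over all tuples $(n_1, \dots, n_s)$, of spaces of the same dimension as $(V(n_1\lambda_1) \otimes \dots \otimes V(n_s\lambda_s))^G$. The tuple $(0,\dots,0)$ contributes exactly the constant subalgebra $\kk \subseteq A^G$, so $A^G = \kk$ is equivalent to vanishing of all other summands, i.e., to invariant-freeness of $(\lambda_1, \dots, \lambda_s)$.

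For $(2) \Leftrightarrow (3)$ I would invoke standard GIT. The origin is a $G$-fixed point of $X := \HV{\lambda_1} \times \dots \times \HV{\lambda_s}$, so $\{0\}$ is a closed $G$-orbit. Condition $(3)$ says precisely that the categorical quotient $X \catquot G$ is a single point, which is equivalent to $\{0\}$ being the unique closed $G$-orbit in $X$; by the basic fact that every orbit closure contains a unique closed orbit, this is the same as $0 \in \overline{G \cdot x}$ for every $x \in X$, which is $(2)$. Conversely, if every orbit closure contains $0$, any $G$-invariant regular function takes its value at $0$ on every orbit closure and is therefore constant, giving $(3)$.

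The only non-routine input is the structural description $\kk[\HV{\lambda}] = \bigoplus_{n \geq 0} V(n\lambda)^*$; this is the main step and will be quoted from \cite{HVmanifolds} or \cite{Popov} rather than reproved. Everything after it is formal manipulation of rational $G$-modules together with elementary GIT, so I do not anticipate any real obstacle beyond being careful to match the definition of \emph{invariant-free} (which quantifies over all positive $n_i$) with the grading of $A$ (which starts at $n_i = 0$, contributing exactly the constants).
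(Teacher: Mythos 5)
The paper does not actually prove this statement: it is imported verbatim as \cite[Theorem~10]{Popov}, so there is no in-paper argument to compare yours against. Judged on its own merits, your proof is correct and is essentially the standard argument (and, as far as one can tell, close to Popov's own). The structural input $\kk\left[\HV{\lambda}\right] = \bigoplus_{n \geq 0} V(n\lambda)^*$ is indeed available from \cite{HVmanifolds}; the term-by-term identification of $A^G$ with the spaces $\left( V(n_1\lambda_1) \otimes \dots \otimes V(n_s\lambda_s) \right)^G$ via $\dim \left(V^*\right)^G = \dim V^G$ is sound; and the GIT step ($\kk[X]^G = \kk$ iff $X \catquot G$ is a point iff the closed orbit $\{0\}$ is the unique closed orbit, hence lies in every orbit closure) is the standard unique-closed-orbit-in-each-fibre fact for reductive group actions on affine varieties. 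The one point you rightly flag at the end does deserve an explicit sentence in a final write-up: the grading of $A$ runs over all tuples of nonnegative integers, whereas the definition of invariant-freeness as worded in this paper quantifies over tuples of natural numbers. Your equivalence $(1) \Leftrightarrow (3)$ is airtight if invariant-freeness is taken in Popov's convention (vanishing for every tuple of nonnegative integers other than the zero tuple); under the strictly positive reading you must dispose of the mixed tuples (some $n_i = 0$) separately, e.g.\ by noting that condition (3) forces the invariants of every graded piece, mixed or not, to vanish, so that the honest content of (1) is vanishing over all nonzero nonnegative tuples. With that alignment made explicit the argument is complete.
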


\subsection{Auxiliary facts about tensor products of~$\Spin_{2r}$-modules}

In order to prove Theorem~\ref{s_m-lower-bound} for~$G = \Spin_{4n+2}$ we need to find explicitly the decomposition of a certain tensor product. To this end we employ the generalised Littlewood-Richardson rule. Necessary facts about this generalisation can be found in~\cite{LR}.

\begin{definition}\cite[Appendix A.3]{LR}
Let~$\varpi_i, 1 \leq i \leq r$ be the fundamental weights of~$\Spin_{2r}$ and let~$\lambda = \sum\nolimits_{i=1}^{r}{a_i \varpi_i}, a_i \geq 0$ be its dominant weight. A Young diagram of shape~$\lambda$ is a Young diagram corresponding to the partition~$\left( c_1, \dots, c_r \right)$ with~$c_i$ defined as:
\begin{equation*}
c_p = \left\{ \begin{array}{ll}
2\sum\limits_{i=p}^{r-2}{a_i} + a_{r-1} + a_r	&	\mbox{if}\ p \leq r-2, 	\\
a_{r-1} + a_r															&	\mbox{if}\ p = r-1,\\
a_r																			&	\mbox{if}\ p = r.
\end{array}\right.
\end{equation*}
\end{definition}

\begin{remark}
We treat the numbers~$c_i$ as lengths of rows ($c_1$ being the length of the bottom row) and draw the rows left-aligned and from bottom to top.
\end{remark}

\begin{definition}\cite[Appendix A.4]{LR}
Let~$T$ be the Young diagram of shape~$a\varpi_{2r}$ and suppose that its cells are filled with natural numbers. The diagram~$T$ with filled cells is said to be a~$\Spin_{2r}$-standard Yound tableau if it satisfies the following requirements:
\begin{itemize}
\item all cells of~$T$ contain natural numbers that are not greater than~$2r$;
\item entries in rows are strictly ascending (the rows are oriented left-to-right);
\item entries in columns are ascending (the columns are oriented bottom-to-top);
\item no row contains~$i$ and~$2r+1-i$ simultaneously;
\item every row has even number of entries that are greater than~$r$.
\end{itemize}
\end{definition}

\begin{remark}
The definition of standard Young tableau~$T$ of arbitrary shape~$\mu$ is more involved and imposes more constraints on entries of~$T$. We will not provide this definition in full detail for it is of no use to our later arguments. An intereseted reader is encouraged to consult~\cite[Appendix A.4]{LR} and see the definition in its full generality.
\end{remark}

\begin{definition}\cite[Appendix A.4]{LR}
Let~$T$ be a standard Young tableau. Denote~$C_T(i)$ the number of entries of~$T$ that are equal to~$i$. Define a weight of tableau~$T$ as
\begin{equation*}
v(T) = \frac{1}{2} \left[ \left( C_T(1) - C_T(2r) \right)\varepsilon_1 + \left( C_T(2) - C_T(2r-1) \right)\varepsilon_2 + \dots \right].
\end{equation*}
Denote~$v_m(T)$ the weight of tableau~$T_m$ obtained from~$T$ by removing all rows below the $m$-th one.
\end{definition}

\begin{definition}\cite[Appendix A.4]{LR}
Let~$\mu$ be a dominant weight. A standard Young tableau~$T$ is called~$\mu$-dominant if the weights~$2\mu + 2v_m(T)$ are dominant for every~$m$.
\end{definition}

\begin{thm}\cite[Appendix A.4]{LR}
Let~$\lambda$ and~$\mu$ be dominant weights of~$\Spin_{2r}$. Then
\begin{equation*}
V(\lambda) \otimes V(\mu) = \bigoplus\limits_T V(\lambda + v(T)),
\end{equation*}
the sum on the right-hand side runs over all~$\lambda$-dominant standard Young tableaux of shape~$\mu$.
\end{thm}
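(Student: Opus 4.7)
The statement is quoted as a black box from \cite[Appendix A.4]{LR}, so no proof appears in the excerpt; let me nevertheless sketch how one would argue from scratch. The cleanest route runs through the Kashiwara--Nakashima crystal basis of type $D_r$. The plan is to prove the character identity
\begin{equation*}
\chi_\lambda \cdot \chi_\mu \;=\; \sum_T \chi_{\lambda+v(T)},
\end{equation*}
where $T$ ranges over $\lambda$-dominant standard Young tableaux of shape $\mu$, by interpreting both sides as generating functions over explicit combinatorial models and matching them bijectively.

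First I would establish that the $\Spin_{2r}$-standard Young tableaux of shape $\mu$, as defined in the excerpt, index a basis of the crystal $B(\mu)$ of the irreducible $\Spin_{2r}$-module $V(\mu)$, with the crystal weight of~$T$ equal to~$v(T)$. This identifies $\chi_\mu$ with the generating function $\sum_T e^{v(T)}$. The type-$D$-specific constraints (no row containing both $i$ and $2r+1-i$, and evenness of the count of entries exceeding $r$ in every row) are precisely what selects from the classical semistandard fillings a family compatible with the Kashiwara operators $e_{r-1},e_r,f_{r-1},f_r$ attached to the two ``forked'' simple roots of~$D_r$, and the factor $\tfrac12$ in the definition of $v(T)$ reflects the appearance of the two half-spin fundamental weights.

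Next I would invoke Kashiwara's tensor product rule: the highest weight elements of $B(\lambda)\otimes B(\mu)$ are exactly those $v_\lambda\otimes T$ with $T\in B(\mu)$ such that every initial segment of the (row-wise) reading word of~$T$ keeps the running weight inside the dominant Weyl chamber translated by~$\lambda$. Rephrased row by row, this is precisely the $\lambda$-dominance condition that $2\lambda+2v_m(T)$ is dominant for all~$m$. Each such highest weight element then generates an irreducible summand isomorphic to $V(\lambda+v(T))$, which yields the decomposition claimed.

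The main obstacle is the first step: verifying that the tableau model as stated really does enumerate a crystal basis of $V(\mu)$ with the correct weights. This amounts to carrying out the type-$D$ modification rule that converts classical Young tableau combinatorics (which compute $\mathrm{GL}_{2r}$-characters) into $\Spin_{2r}$-characters, taking proper account of the folding of the Dynkin diagram of $A_{2r-1}$ onto $D_r$ and of the subtle behaviour of the half-spin representations. This combinatorial verification is delicate but standard, and is precisely the content of the cited appendix in~\cite{LR}.
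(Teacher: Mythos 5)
This theorem is not proved in the paper at all: it is imported verbatim from Littelmann's article (\cite[Appendix A.4]{LR}) as a black box, so there is no internal proof to compare yours against. You correctly recognise this. What you offer is a sketch of a genuinely different route from the cited source: Littelmann's 1990 paper establishes the rule as a generalised Littlewood--Richardson rule by combinatorial/standard-monomial methods, whereas you propose to reprove it via Kashiwara--Nakashima crystals and the tensor product rule for crystals. That is a legitimate and by now standard strategy for decomposition rules of this kind, and the overall architecture (identify the tableaux with a crystal model for $B(\mu)$, then classify highest-weight elements of $B(\lambda)\otimes B(\mu)$) is sound.

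Two points in your sketch are, however, asserted rather than argued, and they carry essentially all of the content. First, the identification of Littelmann's $\Spin_{2r}$-standard tableaux (note that the paper only states the definition for rectangular shapes $a\varpi_{2r}$ and explicitly omits the general case) with a weight-preserving model for the crystal $B(\mu)$ is the entire difficulty; you name it as ``delicate but standard'' and defer it, so the sketch does not actually close. Second, your claim that Kashiwara's highest-weight criterion --- which requires the running weight of an \emph{initial segment of a reading word} to stay dominant, i.e.\ a letter-by-letter condition --- is ``precisely'' Littelmann's condition that $2\lambda+2v_m(T)$ be dominant for every $m$ --- a condition checked only at \emph{row boundaries} --- is not a rephrasing but a nontrivial combinatorial equivalence (a priori the letter-by-letter condition is stronger), and in type $D$ the correct reading order of a Kashiwara--Nakashima tableau is itself not the naive row-wise one. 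Neither gap is fatal to the strategy, but as written the proposal is an outline of a proof, not a proof, and for the purposes of this paper the honest course is what the author does: cite Littelmann.
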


\subsection{An example of an invariant-free triple of weights of the group~$G = \Spin_{4n+2}$}

In ~\cite{Popov} it has been proved that the collection $\left( \varpi_{2n+1}, \varpi_{2n+1}, \varpi_{2n+1} \right)$ of weights of~$\Spin_{4n+2}$ is \textit{primitive}, that is $\dim \left( V\left( n_1\varpi_{2n+1} \right) \otimes V\left( n_2\varpi_{2n+1} \right) \otimes V\left( n_3\varpi_{2n+1} \right) \right)^{\Spin_{4n+2}} \leq 1$ for all natural numbers $n_1, n_2, n_3$. We need more accurate information about this collection. Precisely, we need to prove that it is invariant-free.

\begin{lemma}
Let~$p$ and~$q$ be two natural numbers such that~$p \geq q$. Then we have the following decomposition:
\begin{equation*}
V\left(p\varpi_{2n+1}\right) \otimes V\left(q\varpi_{2n+1}\right) =
{\bigoplus} V\left( (p+q-2r)\varpi_{2n+1} + \varpi_{i_1} + \dots + \varpi_{i_r} \right),
\end{equation*}
the sum on the right-hand side runs over all~$r$ in~$0, \dots, q$ and over all collections of odd natural numbers $1 \leq i_1 \leq i_2 \leq \dots \leq i_r \leq 2n-1$.
\end{lemma}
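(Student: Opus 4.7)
\textit{Proof plan.} The approach is to apply the generalised Littlewood--Richardson rule recalled above to the product $V(p\varpi_{2n+1})\otimes V(q\varpi_{2n+1})$, with $\lambda=p\varpi_{2n+1}$ and $\mu=q\varpi_{2n+1}$. Write $r:=2n+1$ for the rank of $\Spin_{4n+2}$.

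First, from the definition of the Young diagram of a dominant weight with $a_r=q$ and $a_i=0$ otherwise one obtains $c_p=q$ for all $p$, so the shape of $\mu=q\varpi_r$ is the $r\times q$ rectangle.

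Next, I classify the $\Spin_{2r}$-standard Young tableaux $T$ of this rectangular shape. Using the row and column ordering together with the rule that no row contains both $i$ and $2r+1-i$, each column of $T$ must select exactly one element from every pair $\{i,2r+1-i\}$; the parity condition then forces the number of ``upper'' selections (entries $>r$) per column to be even. Thus every column is encoded by an even-cardinality subset $S\subseteq\{1,\dots,r\}$, and the column's bottom-to-top entries are $\{i:i\notin S\}\cup\{2r+1-j:j\in S\}$ listed in increasing order. A short calculation shows that this column contributes $\tfrac12\sum_{i\notin S}\varepsilon_i-\tfrac12\sum_{i\in S}\varepsilon_i$ to $v(T)$.

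Letting $m_j$ denote the total number of columns containing $j$ in their encoding set, summation over all columns gives
\begin{equation*}
v(T)=q\varpi_r-\sum_{j=1}^{r}m_j\varepsilon_j.
\end{equation*}
Setting $s:=m_r$ and introducing the weakly-increasing multiset $(i_1,\dots,i_s)$ in which the value $j$ occurs with multiplicity $m_{j+1}-m_j$ (convention $m_{r+1}:=m_r$), one can rewrite
\begin{equation*}
\lambda+v(T)=(p+q-2s)\varpi_r+\varpi_{i_1}+\dots+\varpi_{i_s}.
\end{equation*}
The constraints on $T$---standardness together with $(p\varpi_r)$-dominance under the hypothesis $p\geq q$---translate, after bookkeeping, into exactly the conditions $0\leq s\leq q$, oddness of the $i_k$, and $1\leq i_1\leq\dots\leq i_s\leq 2n-1$. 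The per-column parity forces $m_{2k}=m_{2k+1}$, so only odd values of $j$ can occur among the $i_k$; the upper bound $i_s\leq 2n-1=r-2$ follows because the pair of indices $\{r-1,r\}$ behaves in tandem in every admissible column.

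\textbf{The main obstacle.} The subtle step is the reverse direction: showing that every multiset $(m_j)$ satisfying the listed conditions comes from a genuine $\Spin_{2r}$-standard Young tableau, and that the full dominance requirement on all partial weights $2\lambda+2v_m(T)$ imposes no further restriction once $p\geq q$. I would handle this by explicitly exhibiting a left-to-right ordering of the column subsets (essentially lexicographic) that produces strictly ascending rows and keeps each intermediate partial weight controlled by the slack afforded by $p\geq q$. The combinatorics are routine but must be executed with care to match the parametrisation in the statement and to rule out spurious extra terms.
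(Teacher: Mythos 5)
Your overall strategy coincides with the paper's: apply Littelmann's rule to the rectangular shape of $q\varpi_{2n+1}$, observe that each line of length $2n+1$ selects exactly one element from each pair $\left\{ i, 4n+3-i \right\}$ with an even number of ``upper'' selections, and then read off $\lambda + v(T)$. (Your ``columns'' are the paper's ``rows'' --- the paper's conventions put the conditions on lines of length $2n+1$ --- but that is only a transposition of terminology.) Your weight computation $v(T) = q\varpi_{2n+1} - \sum_j m_j\varepsilon_j$ and the translation into $(p+q-2s)\varpi_{2n+1} + \varpi_{i_1} + \dots + \varpi_{i_s}$ are correct.

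However, the proposal stops exactly where the lemma's actual content begins, and one intermediate claim is wrong as stated. First, the assertion that ``the per-column parity forces $m_{2k}=m_{2k+1}$, so only odd values of $j$ can occur'' does not follow from parity alone: evenness of each selection set $S$ constrains $\left| S \right|$, not the individual counts $m_j$, and an arbitrary standard tableau can perfectly well have $m_j$ non-monotone, in which case $\lambda + v(T)$ is not of the claimed form at all. What is really needed --- and what the paper proves --- is that $p\varpi_{2n+1}$-dominance of the partial weights forces each selection set to be a \emph{trailing interval} $\left\{ k+1, \dots, 2n+1 \right\}$ with the $k$'s weakly increasing down the tableau; only then do monotonicity of the $m_j$ and oddness of the $k$'s (from $2n+1-k$ even) follow. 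The paper establishes this by an induction over the lines, comparing the coefficients of $\varepsilon_x$ and $\varepsilon_y$ in $2p\varpi_{2n+1}+2v_l(T)$ for a removed $x$ and a retained $y>x$. Second, you explicitly defer the converse (every admissible collection $(k_1,\dots,k_s)$ arises from an actual dominant tableau, with multiplicity one) as ``routine but must be executed with care''; in the paper this is done by an explicit construction of the tableau together with the inequality $p+q-2l \geq p-q \geq 0$, which is precisely where the hypothesis $p \geq q$ enters. As written, the proposal is a plausible plan with the two decisive steps missing, so it does not yet constitute a proof.
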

\begin{proof}
A standard Young tableau~$T$ of shape~$r\varpi_{2n+1}$ is a rectangle with~$2n+1$ columns and~$r$ rows. Since a row of~$T$ has~$2n+1$ entries, it is uniquely defined by those of its entries that are not greater than~$2n+1$. Let~$I = \left\{ i_1 < i_2 < \dots < i_p \right\}$ and~$J = \left\{ j_1 < \dots < j_s \right\}$ be two sets of natural numbers such that~$I \sqcup J = \left\{ 1, \dots, 2n+1 \right\}$. If a row of~$T$ starts with~$I$ then the remaining numbers are necessarily~$4n+3-j_s, \dots, 4n+3-j_1$. The weight of such row is~$\left( \sum\nolimits_{i=1}^{2n+1}{a_i \varepsilon_i} \right) / 2$ with~$a_i = +1$ if~$i \in I$ and~$a_i = -1$ if~$i \in J$.

Denote for brevity~$i' = (4n+3-i)$. In what follows we say that elements of~$J$ are \textit{removed} from the interval~$1, \dots, (2n+1)$ and a row of tableau~$T$ that corresponds to~$I$ and~$J$ (that is, one that starts with~$I$) is said to be obtained from interval~$1, \dots, 2n+1$ by removing elements of~$J$.

Let us describe all~$\varpi_{2n+1}$-dominant Young tableaux~$T$. First consider two adjacent rows of~$T$. Let~$p$ and~$q$ be the smallest numbers removed from the top and bottom line respectively. Then these rows end with numbers~$p'$ and~$q'$ respectively. Tableau~$T$ is standard hence~$p' \geq q'$ and~$p \leq q$. Now let us show that every row of~$T$ is obtained by removing trailing numbers from the interval~$1, \dots, 2n+1$. Combining this with the previous statement we conclude that~$T$ looks like the tableau below ($k_1 \leq k_2 \leq \dots \leq k_{r}$):
\begin{equation*}
\begin{array}{|lcc|}
\hline
1 \dots k_1 \	\vline \ (2n+1)' \dots\dots\dots	&	&	(k_1+1)'	\\
\hline
1 \dots\dots k_2 \ \vline \ (2n+1)'\dots\dots &	&	(k_2+1)' \\
\hline
1\dots\dots\dots k_3 \ \vline \ (2n+1)'\dots &	&	(k_3+1)' \\
\hline
\hdotsfor{3} \\
\hline
1 \dots	&	&	2n+1 \\
\hline
\end{array}.
\end{equation*}
Note that all numbers~$k_i$ are odd because every row has even number of entries that are greater than~$2n+1$.

To this end consider the topmost row of the tableau~$T$. If it is obtained from~$1, \dots, 2n+1$ by removing any set other than a trailing interval~$s, \dots, 2n+1$ then there are two numbers $1 \leq x < y \leq 2n+1$ such that~$x$ is removed while~$y$ is not. It implies that~$2p\varpi_{2n+1} + 2v_1(T)$ equals $(p+1)\varepsilon_1 + \cdots + (p-1)\varepsilon_x + \cdots + (p+1)\varepsilon_y + \cdots $. Since the coefficient of~$\varepsilon_x$ is smaller than the coefficient of~$\varepsilon_y$ the weight~$2p\varpi_{2n+1} + 2v_1(T)$ is not dominant. This contradicts the assumption of~$p\varpi_{2n+1}$-dominance of the tableau~$T$. Therefore the topmost row of~$T$ is obtained from~$1, \dots, 2n+1$ by removing some trailing part of this interval.

Now we proceed by induction. Suppose that top~$l$ rows of the tableau~$T$ are obtained by removing trailing intervals. Let~$k_1+1, \dots, k_l+1$ be the smallest numbers removed from the top rows of~$T$. We assume inductively that we have inequalities~$k_1 \leq \dots \leq k_l$ and equality
\begin{equation*}
2p\varpi_{2n+1} + 2v_l(T) = 2(p+q-2l)\varpi_{2n+1} + 2\varpi_{k_1} + \dots 2\varpi_{k_l}.
\end{equation*}
Without loss of generality we may assume that~$k_l < 2n+1$. Let us apply to the~\hbox{$(l+1)$-st} row the same argument that we have applied to the topmost row of~$T$. The smallest number removed from the~$(l+1)$-st row is not smaller than the smallest number removed from the~\hbox{$l$-th} row. Therefore the numbers~$x$ and~$y$ yielded by the argument will be greater than~$k_l$. The fundamental weights~$\varpi_{k_i}$ are sums of~$\varepsilon_i$ with $i \leq k_l  < x$ hence they do not influence the coefficients of~$\varepsilon_x$ and of~$\varepsilon_y$. From this fact it follows that the reasoning based on comparison of coefficients of~$\varepsilon_x$ and of~$\varepsilon_y$ stays valid and proves that the weight~$2p\varpi_{2n+1} + 2v_{l+1}(T)$ is not dominant.

As we can see, every standard~$p\varpi_{2n+1}$-dominant Young tableau looks like one on the picture above. From the proof it follows that, conversely, every Young tableau depicted above is~$p\varpi_{2n+1}$-dominant if~$q \leq r$. Indeed, such tableau is standard and all partial weights~$2p\varpi_{2n+1} + 2v_l(T)$ are dominant because $p + q - 2l \geq p-q \geq 0$.

The weight of the tableau~$T$ depicted above is $(p+q-2s) + \varpi_{k_1} + \dots + \varpi_{k_s}$ with $s$ being the number of rows that have some of numbers removed and~$(k_i+1)$ being the smallest number removed in~$i$-th row. This means that all irreducible modules contained in the decomposition of~$V\left( p\varpi_{2n+1} \right) \otimes V\left( q\varpi_{2n+1} \right)$ equal $V\left( (p+q-2s)\varpi_{2n+1} + \varpi_{k_1} + \dots + \varpi_{k_s} \right)$ for appropriate collection of~$k_i$.

To complete the demonstration we have to show that every weight
\begin{equation*}
\lambda = (p+q-2l)\varpi_{2n+1} + \varpi_{k_1} + \cdots + \varpi_{k_l},
\end{equation*}
with~$k_i$ being odd natural numbers not greater than~$2n-1$ can be obtained as~$p\varpi_{2n+1} + v(T)$ for appropriately chosen ~$p\varpi_{2n+1}$-dominant Young tableau~$T$. Without loss of generality we may assume that~$k_1 \leq \dots \leq k_l$. Fill the topmost row of~$T$ in the following way: write numbers $1, \dots, k_1$ into~$k_1$ starting cells and pad them with~$(2n+1)', \dots, (k_1+1)'$. Next~$l-1$ rows are filled analogously and last~$q-l$ are filled with~$1, \dots, (2n+1)$. It is clear that for the tableau~$T$ constructed by this process we have~$\lambda = p\varpi_{2n+1}+v(T)$. Applying the rule of Littlewood-Richardson we get that~$V(\lambda)$ is indeed a submodule of~$V\left( p\varpi_{2n+1} \right) \otimes V\left( q\varpi_{2n+1} \right)$. It is obvious that~$\lambda$ can be uniquely represented as~$p\varpi_{2n+1} + v(T)$ hence~$V(\lambda)$ is contained in the tensor product with multiplicity~one.
\end{proof}

For brevity we will employ the multiindex notation. Let~$I = \left( i_1, \dots, i_s \right)$ be a multiindex with all components~$i_j$ being odd natural number not greater than~$2n-1$. Denote~$\left| I \right|$ the number of components of~$I$ and define~$\varpi_I$ as the sum~$\sum\nolimits_{i \in I}\varpi_i$. Using this notation one can rewrite the decomposition of~$V\left(p\varpi_{2n+1}\right) \otimes V\left(q\varpi_{2n+1}\right)$ in this way: $\bigoplus V\left( (p+q-2\left| I \right|)\varpi_{2n+1} + \varpi_I \right)$.

\begin{lemma}\label{half-spin-rep-is-inv-free}
The triple $(\varpi_{2n+1}, \varpi_{2n+1}, \varpi_{2n+1})$ is invariant-free.
\end{lemma}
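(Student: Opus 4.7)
\emph{Proof plan.} The strategy is to reformulate the desired vanishing in terms of decompositions of tensor products and then apply the explicit decomposition furnished by the preceding lemma. Concretely, for every triple of positive integers $n_1,n_2,n_3$ the space $(V(n_1\varpi_{2n+1}) \otimes V(n_2\varpi_{2n+1}) \otimes V(n_3\varpi_{2n+1}))^G$ is nonzero if and only if $V(n_3\varpi_{2n+1})^{*}$ occurs as an irreducible summand of $V(n_1\varpi_{2n+1}) \otimes V(n_2\varpi_{2n+1})$. Hence it suffices to exhibit, for every choice of $n_1,n_2,n_3$, a discrepancy between the highest weight of $V(n_3\varpi_{2n+1})^{*}$ and those appearing in the above tensor product.

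The first step is to identify the dual. Since the root system $D_{2n+1}$ has odd rank, the longest element $w_0$ of $\WW$ is not $-\Id$; instead, $-w_0$ is the Dynkin diagram involution swapping the two half-spin fundamental weights $\varpi_{2n}$ and $\varpi_{2n+1}$. Consequently $V(n_3\varpi_{2n+1})^{*} \isom V(n_3\varpi_{2n})$, and it remains to show that $V(n_3\varpi_{2n})$ is not a summand of $V(n_1\varpi_{2n+1}) \otimes V(n_2\varpi_{2n+1})$.

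The second step is to invoke the preceding lemma directly. Its explicit decomposition shows that every irreducible summand of $V(n_1\varpi_{2n+1}) \otimes V(n_2\varpi_{2n+1})$ has a highest weight of the form $(n_1+n_2-2|I|)\varpi_{2n+1} + \varpi_I$, where $I$ is a multiindex of \emph{odd} integers between $1$ and $2n-1$. The crucial observation is that the fundamental weight $\varpi_{2n}$ never enters such an expression, so the coefficient at $\varpi_{2n}$ of every summand's highest weight equals zero. On the other hand, the coefficient at $\varpi_{2n}$ of $n_3\varpi_{2n}$ is $n_3 \geq 1$. This forces $V(n_3\varpi_{2n})$ to be absent from the decomposition and completes the proof.

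There is no serious obstacle once the preceding lemma is in place: the whole argument reduces to comparing one coefficient of a fundamental weight. The only subtle conceptual point is the duality step, which is exactly where the oddness of $2n+1$ (equivalently, of the rank of $D_{2n+1}$) enters: without it one would be looking for $V(n_3\varpi_{2n+1})$ among the summands, and since $n_1+n_2-2|I|$ can vanish for suitable $|I|$, no contradiction would follow.
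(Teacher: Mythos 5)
Your proof is correct, but it takes a genuinely different route from the paper's. The paper proves the lemma by ordering $p \geq q \geq r$, expanding $V(p\varpi_{2n+1})\otimes V(q\varpi_{2n+1})$ via the preceding lemma, and then applying the Littlewood--Richardson machinery a \emph{second} time to each summand tensored with $V(r\varpi_{2n+1})$: it examines the coefficient of $\varepsilon_1$ in $(p+q-2|I|)\varpi_{2n+1}+\varpi_I+v(T)$, which equals $(p+q-r)/2+t \geq p/2 > 0$, so the zero weight never arises. You instead use the standard identification $\bigl(V_1\otimes V_2\otimes V_3\bigr)^G \isom \mathrm{Hom}_G\bigl(V_3^*, V_1\otimes V_2\bigr)$ to reduce the question to whether $V(n_3\varpi_{2n+1})^* \isom V(n_3\varpi_{2n})$ (the dual of the half-spin representation in odd rank $D_{2n+1}$, where $-w_0$ swaps $\varpi_{2n}$ and $\varpi_{2n+1}$) occurs in the twofold product; since every highest weight in the preceding lemma's decomposition is supported on $\varpi_{2n+1}$ and on $\varpi_i$ with $i$ odd and $i \leq 2n-1$, its coefficient at $\varpi_{2n}$ is zero, while that of $n_3\varpi_{2n}$ is $n_3 \geq 1$. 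Your argument buys a shorter proof that avoids any further tableau analysis, at the price of importing the duality of the half-spin representations; the paper's argument stays entirely inside the combinatorial framework it has already set up and needs no facts about $w_0$. Both rest on the same preceding lemma and both use the positivity of the $n_i$ in an essential way, so there is no gap in your version (just note that you should order $n_1 \geq n_2$ before invoking the preceding lemma, which assumes $p \geq q$ --- harmless since the tensor product is symmetric).
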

\begin{proof}
Take tree natural numbers~$p \geq q \geq r$. According to the previous lemma we have
\begin{multline*}
V\left( p\varpi_{2n+1} \right) \otimes V\left( q\varpi_{2n+1} \right) \otimes V\left( r\varpi_{2n+1} \right) = \\
= \bigoplus\limits_{I} \left[ V\left( (p+q-2\left| I \right|)\varpi_{2n+1} + \varpi_I \right) \otimes V\left( r\varpi_{2n+1} \right) \right].
\end{multline*}

Let us show that no module in the right-hand side contains $\Spin_{4n+2}$-invariant elements. The tensor product $V\left( (p+q-2\left| I \right|)\varpi_{2n+1} + \varpi_I \right) \otimes V\left( r\varpi_{2n+1} \right)$ decomposes into direct sum of~$V\left( (p+q-2\left| I \right|)\varpi_{2n+1} + \varpi_I + v(T) \right)$ for approriately chosen Young tableaux~$T$. Let us show that $-\left( (p+q-2s)\varpi_{2n+1} + \varpi_{i_1} + \dots + \varpi_{i_s} \right)$ can not be equal to weight of any standard tableau~$T$. To this end, fix an arbitrary standard Young tableau~$T$ of shape~$r\varpi_{2n+1}$, that is, a rectangle with~$(2n+1)$ columns and~$r$ rows. The tableau~$T$ is standard and therefore its~$t$ bottom rows start with~$1$ and the other~$r-t$ rows start with numbers that are greater than~$1$, hence~$T$ has weight~$v(T) = 1/2 \left( (t - (r-t))\varepsilon_1 + \dots \right)$. Therefore we have
\begin{equation*}
(p+q-2\left|I\right|)\varpi_{2n+1} + \varpi_I + v(T) =
\left( \frac{p+q-r}{2} + t \right)\varepsilon_1 + \dots
\end{equation*}

If $(p+q-2\left|I\right|)\varpi_{2n+1} + \varpi_I + v(T) = 0$ then $(p+q-r)/2 + t = 0$. The last equality is absurd because~$(p+q-r)/2 \geq p/2 > 0$.
\end{proof}

\subsection{An example of diagonal not stable action of the group~$G = \E_6$ on two copies of variety~$X$}

\begin{lemma}
Let~$G$ be a semismple algebraic group,~$\LieG$ be its Lie algebra and ~$T$ be a maximal torus in~$G$. Let~$\LieG = \LieT \oplus \bigoplus\limits_{\alpha \in \Delta}{\LieG_\alpha}$ be the weight decomposition of~$\LieG$ with respect to~$T$. Finally, let~$V$ be a module over~$G$ and~$v \in V$ be a weight vector with respect to~$T$. Then the Lie algebra~$\LieH$ of stabiliser of~$v$ is regular, that is, it equals~$\LieH = \LieH_0 \oplus \bigoplus\limits_{\alpha \in \Gamma}\LieG_\alpha$ with~$\LieH_0 \subseteq \LieT$ and~$\Gamma \subset \Delta$.
\end{lemma}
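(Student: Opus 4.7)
The plan is to exploit the fact that the stabilizer subgroup $H = \mathrm{Stab}_G(v)$ is $T$-stable (under conjugation), which forces its Lie algebra $\LieH$ to be a $T$-submodule of $\LieG$ under the adjoint action, and then to use that non-zero weight spaces of $\LieG$ under $T$ are the one-dimensional root spaces.

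First I would verify that $T$ normalizes $H$. Let $\mu \in \HV{T}$ be the weight of $v$, so that $t \cdot v = \mu(t) v$ for all $t \in T$. For any $h \in H$ and $t \in T$ compute
\begin{equation*}
(tht^{-1}) \cdot v = t h \cdot (\mu(t^{-1}) v) = \mu(t^{-1}) t \cdot (h \cdot v) = \mu(t^{-1}) t \cdot v = \mu(t^{-1})\mu(t) v = v,
\end{equation*}
so $tht^{-1} \in H$. Hence $T$ acts on $H$ by conjugation, and differentiating gives an action of $T$ on $\LieH$ which is the restriction of the adjoint action of $T$ on $\LieG$.

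Next I would decompose $\LieH$ into $T$-weight spaces. Since $\LieH \subseteq \LieG$ is $T$-stable and the adjoint $T$-action on $\LieG$ is diagonalisable with weight decomposition $\LieG = \LieT \oplus \bigoplus_{\alpha \in \Delta} \LieG_\alpha$, we have
\begin{equation*}
\LieH = (\LieH \cap \LieT) \oplus \bigoplus_{\alpha \in \Delta} (\LieH \cap \LieG_\alpha).
\end{equation*}
Setting $\LieH_0 := \LieH \cap \LieT \subseteq \LieT$ and noting that each root space $\LieG_\alpha$ is one-dimensional, the intersection $\LieH \cap \LieG_\alpha$ is either $0$ or the whole $\LieG_\alpha$. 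Letting $\Gamma := \{\alpha \in \Delta : \LieG_\alpha \subseteq \LieH\}$ yields the desired regular decomposition $\LieH = \LieH_0 \oplus \bigoplus_{\alpha \in \Gamma} \LieG_\alpha$.

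There is no serious obstacle here; the only point requiring care is the verification that $T$ normalizes $H$, which relies crucially on $v$ being a $T$-weight vector (so that the scalar $\mu(t^{-1})$ can be pulled out past $h$ and then cancel $\mu(t)$). Everything else is the standard reduction that a $T$-stable subspace of a $T$-module with one-dimensional non-trivial weight spaces is a direct sum of a subspace of the zero weight space and some of the root spaces.
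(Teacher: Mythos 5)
Your proof is correct. It reaches the paper's conclusion by a genuinely different (though closely related) route: you work at the group level, showing that $T$ normalizes the stabiliser subgroup $H$ of the weight vector $v$, so that $\LieH$ is a $T$-stable subspace of $\LieG$ under the adjoint action, and then you invoke the general fact that a $T$-stable subspace splits as the direct sum of its intersections with the weight spaces, using one-dimensionality of root spaces to conclude that each intersection with $\LieG_\alpha$ is $0$ or all of $\LieG_\alpha$. The paper instead argues directly inside the Lie algebra: for $\xi \in \LieH$ with weight decomposition $\xi = \sum_\mu \xi_\mu$, each $\xi_\mu \cdot v$ is a weight vector of weight $\mu + \nu$ (where $\nu$ is the weight of $v$), and these weights are pairwise distinct, so $\xi \cdot v = 0$ forces every $\xi_\mu \cdot v = 0$, i.e.\ $\xi_\mu \in \LieH$. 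Both arguments amount to proving the same intermediate claim, namely $T$-stability of $\LieH$. The paper's version is a three-line computation but implicitly identifies $\LieH$ with the infinitesimal annihilator $\left\{ \xi \in \LieG \ | \ \xi \cdot v = 0 \right\}$, which is harmless in characteristic zero; your version avoids that identification by working with the stabiliser subgroup itself, and it makes fully explicit where the hypothesis that $v$ is a weight vector is used, namely in pulling the scalar $\mu(t^{-1})$ past $h$ in the conjugation computation. Either approach is acceptable here.
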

\begin{proof}
Consider an arbitrary element~$\xi \in \LieH$. Let~$\xi = \sum\limits_{\mu \in \Delta}{\xi_\mu}$ be its weight decomposition. Since~$0 = \xi \cdot v = \sum\limits_{\mu \in \Delta}{\xi_\mu \cdot v}$ and every summand is a weight vector, we conclude that every summand is zero, that is~$\xi_\mu \in \LieH$ for all~$\mu$. This proves regularity of~$\LieH$.
\end{proof}

\begin{lemma}\label{e6-has-open-orbit-in-x1}
The action of~$\E_6$ on $\HV{\varpi_1} \times \HV{\varpi_1}$ has a dense orbit.
\end{lemma}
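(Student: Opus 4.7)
The plan is to exhibit a pair of weight vectors whose $\E_6$-orbit in $X(\varpi_1)\times X(\varpi_1)$ attains the full ambient dimension, hence is dense. I would take $(u,v)=(v_{\varpi_1},v_{-\varpi_6})$, where $v_{\varpi_1}$ is the highest weight vector of $V(\varpi_1)$ and $v_{-\varpi_6}=v_{w_0\varpi_1}$ is the lowest weight vector (using that $w_0\varpi_1=-\varpi_6$ in $\E_6$, which follows from $w_0$ inducing the nontrivial diagram involution). Both vectors lie in $G\cdot v_{\varpi_1}\subseteq X(\varpi_1)$. Since $\dim\E_6=78$ and $\dim X(\varpi_1)=17$, it suffices to prove that the stabilizer subalgebra of the pair has dimension $44$.

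First I would apply the preceding lemma to describe the individual stabilizers as regular subalgebras. A direct argument identifies them with $\mathfrak{p}_1\cap\ker\varpi_1$ (for $v_{\varpi_1}$) and $\mathfrak{p}_6^-\cap\ker\varpi_6$ (for $v_{-\varpi_6}$), each of dimension $61$. Here $\mathfrak{p}_1$ is the standard maximal parabolic associated to $\varpi_1$, with Levi of type $D_5$, and $\mathfrak{p}_6^-$ is the parabolic opposite to $\mathfrak{p}_6$ (it stabilises the line through $v_{-\varpi_6}$ because $w_0\mathfrak{p}_1w_0^{-1}=\mathfrak{p}_6^-$). The stabilizer subalgebra of the pair is therefore $(\mathfrak{p}_1\cap\mathfrak{p}_6^-)\cap\ker\varpi_1\cap\ker\varpi_6$.

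The main step, and the one I expect to be the key subtlety, is computing $\mathfrak{p}_1\cap\mathfrak{p}_6^-$. Because $\E_6$ admits a nontrivial outer automorphism, $\mathfrak{p}_1$ is \emph{not} self-opposite; its opposite conjugacy class is represented by $\mathfrak{p}_6^-$. Consequently the intersection is not a Levi but has structure $(\mathfrak{d}_4\oplus\mathfrak{t}_2)\ltimes\mathfrak{a}$, where $\mathfrak{d}_4$ is the $D_4$-subalgebra on the nodes $2,3,4,5$, the central torus $\mathfrak{t}_2=\langle\check\varpi_1,\check\varpi_6\rangle$ is two-dimensional, and $\mathfrak{a}$ is a $16$-dimensional abelian ideal consisting of the $8$ positive root spaces involving $\alpha_1$ but not $\alpha_6$ together with the $8$ negative root spaces involving $\alpha_6$ but not $\alpha_1$. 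Straightforward root enumeration gives $\dim(\mathfrak{p}_1\cap\mathfrak{p}_6^-)=6+20+20=46$.

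To conclude, I would verify that $\varpi_1$ and $\varpi_6$ impose independent conditions on $\mathfrak{p}_1\cap\mathfrak{p}_6^-$. Both characters vanish on $\mathfrak{d}_4$ and on $\mathfrak{a}$, so linear independence reduces to linear independence on $\mathfrak{t}_2$. The $2\times 2$ block $(\varpi_i(\check\varpi_j))_{i,j\in\{1,6\}}$ of the inverse Cartan matrix of $\E_6$ equals $\tfrac{1}{3}\bigl(\begin{smallmatrix}4&2\\2&4\end{smallmatrix}\bigr)$, which is nondegenerate. Hence the two character conditions drop the dimension by two, yielding $\dim\mathrm{Stab}(u,v)=46-2=44$, and the orbit of $(u,v)$ has dimension $78-44=34=\dim X(\varpi_1)^2$. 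The decisive point is that $\mathfrak{p}_1\cap\mathfrak{p}_6^-$ has a two-dimensional central torus, available precisely because $P_1$ is not self-opposite; had this centre been one-dimensional (as for a Levi of a single parabolic), the two characters would be proportional and the stabilizer would have dimension $45$ instead of $44$.
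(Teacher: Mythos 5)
Your proof is correct, and it follows the same overall strategy as the paper: exhibit an explicit pair of extremal weight vectors in $\HV{\varpi_1} \times \HV{\varpi_1}$ and show by a stabiliser computation that its orbit has dimension $34 = 2\dim\HV{\varpi_1}$. The execution differs in a way worth noting. The paper works in the explicit $\varepsilon$-coordinates of the $27$ weights, picks the pair of weights $\varpi_1 = \varepsilon_1+\varepsilon$ and $\varepsilon_6-\varepsilon$, and counts, root by root, which root spaces annihilate each vector (using the regularity lemma and the fact that $\varpi_1$ is minuscule). You instead take the highest and lowest weight vectors, identify the two stabiliser subalgebras structurally as $\mathfrak{p}_1\cap\ker\varpi_1$ and its $w_0$-conjugate, reduce everything to the combinatorics of the parabolic intersection $\mathfrak{p}_1\cap\mathfrak{p}_6^-$ ($6+20+20=46$, which checks out against the $D_5$ and $D_4$ subsystems), and finish with the nondegeneracy of the relevant $2\times 2$ block of the inverse Cartan matrix. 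Your version is more conceptual and isolates the real reason the lemma holds --- $P_1$ is not self-opposite in $\E_6$, so the intersection of the two line stabilisers carries a two-dimensional central torus on which $\varpi_1$ and its dual weight are independent --- whereas the paper's version is more elementary and self-contained but leaves the dimension count as an unexplained coincidence. One cosmetic caveat: the paper numbers simple roots as in Onishchik--Vinberg, in which the weight dual to $\varpi_1$ is $\varpi_5$ rather than the Bourbaki $\varpi_6$ you use; your argument is internally consistent, but the labels should be translated to match the paper's conventions.
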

\begin{proof}
Let us start by calculating dimension of~$\HV{\varpi_1}$. Denote~$P\left( \varpi_1 \right)$ the set of weights that occur in the module~$V\left( \varpi_1 \right)$. Representation~$V\left( \varpi_1 \right)$ has the following property: if~$\xi \in \Lie \E_6$ and~$v \in V\left( \varpi_1 \right)$ are nonzero weight elements with such weights~$\mu$ and~$\nu$ that~$\mu + \nu \in P\left( \varpi_1 \right)$ then~$\xi \cdot v \neq 0$. In view of regularity of stabiliser of~$v_\lambda$ we conclude that this stabiliser is the direct sum of a subspace in~$\LieT$ of codimension~1 and weight spaces~$\LieG_\alpha$ with~$\varpi_1 + \alpha \not \in P\left( \varpi_1 \right)$. This reasoning shows that dimension of~$\HV{\varpi_1}$ equals~1 plus number of roots~$\alpha$ of~$\Lie \E_6$ such that~$\varpi_1 + \alpha \in P\left( \varpi_1 \right)$. Using this fact one easily finds that~$\dim \HV{\varpi_1} = 17$.

Remark that~$\HV{\varpi_1}$ has a vector of weight~$\varepsilon_6 - \varepsilon$. Indeed, the Weyl group of~$\E_6$ contains all permutations of~$\varepsilon_i$ and the mapping~$\varepsilon_i \mapsto \varepsilon_i, \varepsilon \mapsto -\varepsilon$ and these mappings can be used to obtain the necessary vector from the highest weight vector of~$V\left( \varpi_1 \right)$ by applying appropriate element of the Weyl group.

Consider a point of~$\HV{\varpi_1} \times \HV{\varpi_1}$ that has vector~$v$ of weight~$\varpi_1 = \varepsilon_1 + \varepsilon$ as its first component and vector~$w$ of weight~$\varepsilon_6 - \varepsilon$ as its second component. One can easily find the stabiliser of this point using the argument which has been employed for calculation of~$\dim \HV{\varpi_1}$. This argument shows that dimension of orbit of~$(v,w)$ is~34. Thus dimension of this orbit coincides with ~$\dim \HV{\varpi_1} \times \HV{\varpi_1}$. Therefore the orbit of~$(v,w)$ is dense.
\end{proof}

\begin{remark}
It is clear that the action described in the above lemma is not stable for it is not transitive. Indeed, all points in the described orbit have both components non-zero, so the point~$(0,0) \in \HV{\varpi_1} \times \HV{\varpi_1}$ is not contained in the orbit of~$(v,w)$.
\end{remark}

\subsection{Proof of Theorem~\ref{s_m-lower-bound}}

\begin{proof}
\textit{Case 1:~$G$ is~$\SL_n$}. If~$k < n$ then the action~$\SL_n : \left( \kk^n \right)^k$ is not transitive and has a dense orbit, hence~$s_m\left( \SL_n \right) \geq n = m\left( \SL_n \right)$.

\textit{Case 2:~$G$ is one of groups~$\Spin_{2n+1}$, $\Spin_{4n+4}$, $\Sp_{2n}$, $\G_2$, $\F_4$, $\E_7$, $\E_8$}. In all these cases the statement of the theorem is trivial because these groups have~$m(G) = 2$ and every action on a HV-variety is not stable.

\textit{Case 3:~$G$ is~$\Spin_{4n+2}$}. The triple of dominant weights~$\left( \varpi_{2n+1}, \varpi_{2n+1}, \varpi_{2n+1} \right)$ is invariant-free according to Lemma~\ref{half-spin-rep-is-inv-free}. In view of Theorem~\ref{inv-free-implies-instability} the action~$\Spin_{4n+2} : X^3$ with~$X = \HV{\varpi_{2n+1}}$ is not stable. Therefore~$s_m\left( \Spin_{4n+2} \right) \geq 4 = m\left( \Spin_{4n+2} \right)$.

\textit{Suppose~$G$ is~$\E_6$}. Lemma~\ref{e6-has-open-orbit-in-x1} gives an example of a diagonal action of~$\E_6$ with two copies which is not stable. Therefore~$s_m\left( \E_6 \right) \geq 3 = m\left( \E_6 \right)$.
\end{proof}

\subsection{More on bounds of stability indices}

Theorem~\ref{s_m-lower-bound} gives lower bound of~$s_m(G)$ and~$s_s(G)$. Lower bound of~$s_s(G)$ obtained in this way is not exact, that is, there are groups~$G$ that have~$s_s(G) > m(G)$. One example of such group is the symplectic group~$\Sp_{2m}$ because its standard representation requires passing to the diagonal action with~$2m$ copies in order to stabilise. On the other hand, Theorem~\ref{s_m-for-self-conjugate} shows that lower bound of~$s_m(G)$ is in many cases exact.

\prooflike{Proof of Theorem~\ref{s_m-for-self-conjugate}}
Let~$\theta$ be the Weyl involution of~$G$, that is, an involutive authomor\-phism of~$G$ that acts as inversion on some maximal torus in~$G$. It is well-known that for every linear representation~$R$ of group~$G$ its~$\theta$-twisting~$R \circ \theta$ is isomorphic to the conjugate representation~$R^*$. For an affine irreducible $G$-variety~$X$ set~$X^\theta$ to be~$X$ with~$\theta$-twisted action of~$G$.

Every affine~$G$-variety~$X$ admits an equivariant closed immersion~$X \hookrightarrow V$ into appropriate~$G$-module~$V$. Since all linear representations of~$G$ are self-conjugate we have an equivariant isomorphism~$\varphi : V \rightarrow V^\theta$ which can be used to construct isomorphism of~$X \times X$ with~$X \times X^\theta$. From this fact it follows that stability of action of~$G$ on~$X \times X$ is equivalent to stability of action~$G : X \times X^\theta$. The latter action is stable in view of~\cite[Proposition 1.6]{PanyushevSelfConjugate}, so we have~$s_m(G) = 2$.

To complete the proof we need to show that~$m(G) = 2$ for any semisimple group~$G$ that has only self-conjugate linear representations. This is obvious because for every~$G$-module~$V$ we have~$\left( V^{\otimes 2} \right)^G \isom$ $\left( V \otimes V^* \right)^G \isom$ $\left( \End(V) \right)^G$ and~$\End(V)$ contains a nonzero~$G$-invariant element, for example, the identity map~$\Id_V$. Thus~$2 \in M(G)$ and~$m(G) = 2$.
\endproof


\begin{thebibliography}{99}

\bibitem{ArzhStabilisationOfOperation}
I.~V.~Arzhantsev:
On stability of diagonal actions.
Math. Notes 71 (2002), no. 5-6, 735--738.

\bibitem{Humphreys}
J.~E.~Humphreys:
Reflection groups and Coxeter groups.
Cambridge University Press, 1990.

\bibitem{PRV}
S.~Kumar:
Proof of the P-R-V conjecture.
Invent.~Math. 93 (1998), 117--130.

\bibitem{LR}
P.~Littelmann:
A Generalisation of the Littlewood-Richardson Rule.
J.~Algebra 130 (1990), 328-368.

\bibitem{MathieuPRV}
O.~Mathieu:
Construction d'un groupe de Kac-Moody et applications.
Compositio~Mathematica 69 (1989), 37--60.

\bibitem{VinbergOnischik}
A.~L.~Onishchik and E.~B.~Vinberg:
Lie Groups and Algebraic Groups.
Springer-Verlag, Berlin, Heidelberg, 1990.

\bibitem{PanyushevSelfConjugate}
D.~I.~Panyushev:
A restriction theorem and the Poincaré series for U-invariants.
Math.~Ann. 301 (1995), 655--675.

\bibitem{PopovStabilityOnFactorialManifold}
V.~L.~Popov:
Stability criteria for the action of a semisimple group on a factorial manifold.
Math. USSR Izv. 4 (1970), 527--535.

\bibitem{Popov}
V.~L.~Popov:
Tensor product decompositions and open orbits in multiple flag varieties.
J.~Algebra 313 (2007), 392--416.

\bibitem{HVmanifolds}
V.~L.~Popov and E.~B.~Vinberg:
On a class of quasihomogeneous affine varieties.
Math. USSR, Izv. 6 (1973), 743--758.

\bibitem{VinbergStabilityOfRestrictedAction}
E.~B.~Vinberg:
On stability of actions of reductive algebraic groups.
Lie Algebras, Rings and Related Topics / ed. Fong~Yuen, A.~A.~Mikhalev, E.~Zelmanov. Hong-Kong: Springer-Verlag, 2000, 188--202.

\end{thebibliography}
\end{document}